\newcommand{\g}{\frak{g}}
\newcommand{\D}{\mathcal{D}}
\newcommand{\F}{\mathcal{F}}
\newcommand{\I}{\mathcal{I}}
\renewcommand{\O}{\mathcal{O}}
\renewcommand{\P}{\mathcal{P}}
\newcommand{\Q}{\mathcal{Q}}
\renewcommand{\S}{\mathcal{S}}
\newcommand{\T}{\mathcal{T}}
\newcommand{\Der}{\mathrm{Der}}
\renewcommand{\H}{\mathrm{H}}
\newcommand{\Coder}{\mathrm{Coder}}
\newcommand{\card}{\mathrm{card}}
\newcommand{\sgn}{\mathrm{sgn}}
\newcommand{\Ass}{\mathcal{A}ss}
\newcommand{\Com}{\mathcal{C}om}
\newcommand{\Lie}{\mathcal{L}ie}
\newcommand{\Leib}{\mathcal{L}eib}
\newcommand{\Perm}{\mathcal{P}erm}
\newcommand{\Zinb}{\mathcal{Z}inb}
\renewcommand{\d}{\mathbf{d}}
\renewcommand{\c}{\circ}
\newcommand{\ol}{\overline}
\newcommand{\ot}{\otimes}
\newcommand{\s}{\mathbf{s}}
\newtheorem{definition}{Definition}[section]
\newtheorem{lemma}[definition]{Lemma}
\newtheorem{proposition}[definition]{Proposition}
\newtheorem{theorem}[definition]{Theorem}
\newtheorem{corollary}[definition]{Corollary}
\newtheorem{remark}[definition]{Remark}
\newtheorem{example}[definition]{Example}
\date{}
\begin{document}

\title{
Derived bracket construction up to homotopy
and Schr\"{o}der numbers
}
\author{K. UCHINO}
\maketitle
\abstract
{
On introduit la notion de la construction crochet d\'eriv\'e sup\'erieure
dans la cat\'egorie des op\'erades.
On prouve que la construction crochet d\'eriv\'e sup\'erieure
de l'op\'erade $\Lie$ est identique \'{a} la construction cobar
de l'op\'erade $\Leib$ de Jean-Louis Loday.
Ce th\'eor\`em est d\'emontr\'e par le calcul du nombre de Ernst Schr\"oder.
On trouve que la collection d'arbres racin\'es \'etiquet\'es
peut \^etre d\'ecompos\'e par l'op\'erade $\Lie$ et une nouvelle op\'erade.
}

\section{Introduction}
The aim of this note is to prove an identity below.
\medskip\\
\noindent
\textbf{Theorem}.
$\s\Leib_{\infty}\cong\Lie\ot\D_{\infty}$,
\medskip\\
\noindent
where
$\s(-)$ is an operadic suspension,
$\Leib$ is the operad of Leibniz (or Loday) algebras,
$\Lie$ is the one of Lie algebras,
$\Leib_{\infty}$ is the strong homotopy version of $\Leib$ and
$\D_{\infty}=(\D_{\infty},\d)$ is a new dg operad,
which is called a \textbf{deformation operad}.
Here $\d$ is a differential on $\D_{\infty}$
and the tree differential on $\s\Leib_{\infty}$ is equivalent to $\Lie\ot\d$.
Remark that $\D_{\infty}$ is not strong homotopy operad in usual sense.
The following identity is already known,
$$
\s\Leib\cong\Lie\ot\s\Perm,
$$
where $\Perm$ is Chapoton's permutation operad \cite{Chap}.
Theorem is regarded as a homotopy version of this classical identity.\\
\indent
The dg operad $\D_{\infty}$ is defined as a deformation of $\s\Perm$.
The operad $\s\Perm$ can be constructed
with a formal differential $d_{0}$ and the commutative associative operad $\Com$.
The deformation operad will be constructed by using a deformation differential,
$\hbar d_{1}+\hbar^{2}d_{2}+\cdots$, instead of $d_{0}$.
As a corollary of Theorem we prove that
$(\D_{\infty},\d)$ is a resolution over $\s\Perm$.\\
\indent
To prove Theorem we compute (small-)\textbf{Schr\"{o}der numbers}\footnote{
Schr\"{o}der numbers are sometimes called super Catalan numbers.
}
(See Table 1.)
\begin{table}[h]
\begin{center}
\begin{tabular}{c||c|c|c|c|c|c|c|c|c|c}
\hline
$n$&1&2&3&4 &5 &6  &7 &8  &9  &10  \\ \hline
$s(n)$&1&1&3&11&45&197&903&4279&20793&103049    \\
\hline
\end{tabular}
\caption{Schr\"{o}der numbers}
\end{center}
\end{table}
The Schr\"{o}der number $s(n)$ is known as the cardinal number of planar rooted trees with $n$-leaves,
on the other hand, $\s\Leib_{\infty}$ is, as an operad,
isomorphic to the operad of {\em labeled} planar rooted trees.
Theorem says that the set of labeled planar rooted trees is decomposed into
$\Lie$ and $\D_{\infty}$. This gives a new interpritation of Schr\"{o}der numbers.
\medskip\\
\indent
The generator of $\Lie\ot\D_{\infty}$ has the following form,
$$
\{\{\{\{d_{n-1}(1),2\},3\},...,n\}.
$$
Here $\{,\}$ is a Lie bracket in $\Lie(2)$ and $d_{n}$ is a formal derivation.
The bracket of this type is called a \textbf{higher derived bracket}
or derived bracket up to homotopy.
In particular, when $n=2$, $\{d_{1}(1),2\}$ is called a \textbf{binary derived bracket}.
The notion of (binary) derived bracket was defined by Kosmann-Schwarzbach \cite{Kos1}
in the study of Poisson geometry (see also \cite{Kos2}.)
The higher version was introduced by several authors
(cf. Roytenberg \cite{Roy}, Voronov \cite{Vo}.)
Derived brackets were born in Poisson geometry,
however, an important development of {\em derived bracket theory}
was made in the study of algebraic operads by Aguiar \cite{Ag}.
Aguiar discovered that several types of algebras are induced by
the method of derived bracket construction.
Inspired by Aguiar's work, Uchino \cite{U1} introduced the notion of
binary derived bracket construction {\em on the level of operad}.
This is an endofunctor on the category of binary quadratic operads
defined by applying the permutation operad,
$(-)\ot\s\Perm:\P\mapsto\P\ot\s\Perm$.
In non graded case, $(-)\ot\Perm$.
The derived bracket {\em on the level of algebra}
is regarded as a representation of this functor.\\
\indent
In this article, we will try to make a higher version of $(-)\ot\s\Perm$.
Our solution is not $(-)\ot\s\Perm_{\infty}$, but the functor $(-)\ot\D_{\infty}$
appeared in Theorem above.
We call this functor a {\em higher derived bracket construction} (on the level of operad.)
The strong homotopy Leibniz operad $\s\Leib_{\infty}$ is
the result of {\em cobar construction} with Koszul duality theory (Ginzburg and Kapranov \cite{GK}.)
Hence the theorem means
that the higher derived bracket construction of $\Lie$ coincides
with the cobar construction of $\Leib$.
An advantage of the higher derived bracket construction
is that it uses no Koszul duality theory.

\section{Preliminaries}

\subsection{Assumptions and Notations}

Through the paper, all algebraic objects are assumed to be defined
over a fixed field $\mathbb{K}$ of characteristic zero.
The mathematics of graded linear algebra is due to Koszul sign convention.
Namely, the transposition of tensor product satisfies
$o_{1}\ot o_{2}\cong(-1)^{|o_{1}||o_{2}|}o_{2}\ot o_{1}$
for any objects $o_{1}$ and $o_{2}$,
where $|o_{i}|$ is the degree of $o_{i}$.
We denote by $s(-)$ a suspension of degree $+1$.
For any object $o$, the degree of $s(o)$ is $|s(o)|=|o|+1$.
The inverse of $s$ is $s^{-1}$, whose degree is $-1$.

\subsection{Algebraic operads}

We refer the readers to Loday \cite{Lod1,Lod3,Lod2} and
Loday-Vallette \cite{LV}, for the details of algebraic operad theory.\\
\indent
An $\S$-module, $\P:=(\P(1),\P(2),...)$, is by definition a collection of $S_{n}$-modules $\P(n)$,
where $S_{n}$ is the $n$th symmetric group.
The notion of morphism between $\S$-modules
is defined by the usual manner, i.e.,
it is a collection $\phi=(\phi(1),\phi(2),...)$ of equivariant linear mappings $\phi(n):\P_{1}(n)\to\P_{2}(n)$.
Here $\P_{1}$ and $\P_{2}$ are any $\S$-modules.
Thus the category of $\S$-modules is defined.\\
\indent
In the category of $\S$-modules,
a tensor product, $\P_{1}\odot\P_{2}$, is defined by
$$
(\P_{1}\odot\P_{2})(n):=\bigoplus_{m,l}
\P_{1}(m)\ot_{S_{m}}\Big(\P_{2}(l_{1}),...,\P_{2}(l_{m})\Big)\ot_{(S_{l_{1}},...,S_{l_{m}})}S_{n},
$$
where $n=l_{1}+\cdots+l_{m}$ and $l:=(l_{1},...,l_{m})$.
It is easy to see that the tensor product is associative.
We consider a special $\S$-module $\I:=(\mathbb{K},0,0,...)$.
It is easy to check that
$\I\odot\P\cong\P\cong\P\odot\I$.
The concept of binary product on $\P$ is defined as a morphism of $\gamma:\P\odot\P\to\P$.
\begin{definition}[algebraic operad]
A triple $\P:=(\P,\I,\gamma)$ is called an algebraic operad, or shortly operad,
if it is a unital monoid in the category of $\S$-modules.
\end{definition}
If $\P$ is an operad,
$\P(n)$ is considered to be a space of formal $n$-ary operations.
For example $\P(2)$ is a space of formal binary operations,
which are usually denoted by $1*2$, $1\cdot 2$, $[1,2]$, $\{1,2\}$ and so on.
The operad structure $\gamma$ defines a composition product
on the operations, for instance,
\begin{eqnarray*}
&&\gamma:\{1,2\}\ot([1,2], 1*2)\mapsto\{[1,2],3*4\},\\
&&\gamma:[2,1]\ot(1_{\P},\{1,2\})\mapsto[\{2,3\},1],
\end{eqnarray*}
where $1_{\P}$ is the unite element of $\P$.
The numbers $1,2,3,...$ in the formal products are called \textbf{labels} or \textbf{leaves}.
\begin{definition}
For any  $p_{m}\in\P(m)$, $p_{n}\in\P(n)$,
$$
p_{m}\c_{i}p_{n}:=\gamma\Big(p_{m}\ot\big(1_{\P}^{\ot(i-1)} \ , \ p_{n} \ , \ 1_{\P}^{\ot(m-i)}\big)\Big).
$$
where $1\le i\le m$.
\end{definition}
The composition $p_{m}\c_{i}p_{n}$ values in $\P(m+n-1)$.
The structure $\gamma$ is decomposed into the compositions $(\c_{1},\c_{2},...)$.
\begin{definition}[free operad]
Let $\P$ be an $\S$-module not necessarily operad.
The free operad over $\P$, which is denoted by $\T\P$,
is by definition the free unital monoid in the category of
$\S$-modules.
\end{definition}
It is easy to see that $(\T\P)(1)\cong T\P(1)$ the tensor algebra over $\P(1)$.
We denote the quadratic part of $\T\P$ by $\T^{2}\P$,
$$
(\T^{2}\P)(m+n-1)=<p_{m}\c_{i}p_{n} \ | \ 1\le{i}\le m>,
$$
in particular, $(\T^{2}\P)(1)=\P(1)\c_{1}\P(1)\cong\P(1)\ot\P(1)$.
\begin{definition}[quadratic operad]
Let $R\subset\T^{2}\P$ be a sub $\S$-module of $\T\P$.
The quotient operad $\O:=\T^{2}\P/(R)$ is called
a quadratic operad, where $(R)$ is an ideal generated by $R$.
The generator $R$ is called a quadratic relation.
If $\P=\P(2)$ with $\P(n\neq 2)=0$,
$\O$ is called a binary quadratic operad.
\end{definition}
We recall two examples of binary quadratic operads.
The Lie operad, $\Lie$, is a binary quadratic operad generated by
a formal skewsymmetry bracket $\{1,2\}(=-\{2,1\})$.
$$
\Lie:=\T(\{1,2\})/(R_{\Lie}).
$$
The quadratic relation $R_{\Lie}$ is generated by the Jacobi identity,
$$
\{\{1,2\},3\}+\{\{3,1\},2\}+\{\{2,3\},1\}=0.
$$
We obtain the following expression of $\Lie$.
\begin{eqnarray*}
\Lie(1)&=&\mathbb{K},\\
\Lie(2)&=&<\{1,2\}>,\\
\Lie(3)&=&<\{\{1,2\},3\} \ , \ \{\{1,3\},2\}>,\\
\cdots&\cdots&\cdots.
\end{eqnarray*}
\begin{lemma}
$\dim\Lie(n)=(n-1)!$.
\end{lemma}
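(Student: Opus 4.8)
The plan is to prove the two inequalities $\dim\Lie(n)\le(n-1)!$ and $\dim\Lie(n)\ge(n-1)!$ separately: the first by exhibiting an explicit spanning family of $\Lie(n)$ of cardinality $(n-1)!$, the second by showing that this family is linearly independent after mapping it into a free associative algebra.

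For the upper bound I would use the standard identification of $\Lie(n)$ with the multilinear component (multidegree $(1,\dots,1)$) of the free Lie algebra on generators $x_1,\dots,x_n$. Every such element is a linear combination of iterated brackets of $x_1,\dots,x_n$, each label used exactly once. Using antisymmetry $\{a,b\}=-\{b,a\}$ together with the Jacobi identity in the form $\{a,\{b,c\}\}=\{\{a,b\},c\}-\{\{a,c\},b\}$, an induction on bracket length rewrites any bracket as a combination of \emph{left-normed} brackets, and a further induction on the position occupied by the label $1$ brings $1$ into the innermost slot. Hence $\Lie(n)$ is spanned by
$$
e_\sigma:=\{\{\cdots\{x_1,x_{\sigma(2)}\}\cdots,x_{\sigma(n-1)}\},x_{\sigma(n)}\},
$$
as $\sigma$ ranges over the $(n-1)!$ permutations of $\{2,\dots,n\}$, so that $\dim\Lie(n)\le(n-1)!$.

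For the lower bound, consider the Lie-algebra homomorphism from the free Lie algebra into the free associative algebra $T=\mathbb{K}\langle x_1,\dots,x_n\rangle$ equipped with the commutator bracket $\{a,b\}=ab-ba$, sending $x_i\mapsto x_i$; it suffices to prove that the images of the $e_\sigma$ are linearly independent in $T$. Call a monomial of $T$ \emph{$x_1$-leading} if its first letter is $x_1$. Writing $e_\sigma=\{e_\sigma',x_{\sigma(n)}\}=e_\sigma'x_{\sigma(n)}-x_{\sigma(n)}e_\sigma'$ and following the $x_1$-leading monomials inductively, one sees that the $x_1$-leading part of $e_\sigma$ is the \emph{single} monomial $x_1x_{\sigma(2)}x_{\sigma(3)}\cdots x_{\sigma(n)}$ with coefficient $1$, the term $-x_{\sigma(n)}e_\sigma'$ contributing nothing $x_1$-leading because $x_{\sigma(n)}\neq x_1$. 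Since distinct $\sigma$ produce distinct monomials of this shape, any relation $\sum_\sigma c_\sigma e_\sigma=0$ forces each $c_\sigma=0$, whence $\dim\Lie(n)\ge(n-1)!$ and equality holds.

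The step I expect to demand the most care is the second spanning reduction — pushing the label $1$ to the innermost slot of a left-normed bracket — together with making the $x_1$-leading-term claim fully precise on the independence side; both are bookkeeping rather than conceptual obstacles. A slicker route that bypasses the independence argument is to invoke the operadic Poincar\'e--Birkhoff--Witt isomorphism $\Ass\cong\Com\c\Lie$: setting $\ell(x)=\sum_{n\ge1}\dim\Lie(n)\,x^{n}/n!$ and using $\dim\Com(n)=1$ and $\dim\Ass(n)=n!$, composition of exponential generating functions yields $e^{\ell(x)}-1=\sum_{n\ge1}x^{n}=x/(1-x)$, hence $e^{\ell(x)}=1/(1-x)$ and $\ell(x)=-\log(1-x)=\sum_{n\ge1}x^{n}/n$, i.e.\ $\dim\Lie(n)=(n-1)!$.
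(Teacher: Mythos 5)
Your proof is correct, and it does strictly more than the paper's. The paper's argument is a one-line recollection: it exhibits the spanning family of \emph{right}-normed brackets $\{\sigma(1),\{\sigma(2),\dots,\{\sigma(n-1),n\}\}\}$ with the label $n$ pinned in the innermost slot, which only gives the upper bound $\dim\Lie(n)\le(n-1)!$; the matching lower bound is left implicit as classical. You pin the label $1$ and use \emph{left}-normed brackets instead --- an equivalent choice, related to the paper's by reversal and signs --- and then actually close the gap: mapping into the free associative algebra and isolating the unique $x_1$-leading monomial $x_1x_{\sigma(2)}\cdots x_{\sigma(n)}$ of each $e_\sigma$ is a correct and standard way to get linear independence (note you only need the images to be independent, not injectivity of the whole Lie-to-associative map, and your induction handles exactly that). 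The trade-off is the usual one: the paper's proof is a pointer to a well-known fact about the multilinear part of the free Lie algebra, while yours is self-contained at the cost of the two bookkeeping inductions you flag (reduction to left-normed form and moving $1$ to the innermost slot, which is slightly more delicate than one line suggests but is standard via the derivation property of $\mathrm{ad}$). Your alternative via the operadic PBW decomposition $\Ass\cong\Com\circ\Lie$ and exponential generating functions is also valid and arguably the cleanest route, though it outsources the real content to PBW.
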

\begin{proof}
An arbitrary bracket in $\Lie(n)$ is generated by
the right-normed brackets
$$
\{\sigma(1),\{\sigma(2),...,\{\sigma(n-1),n\}\}\},
$$
where $\sigma\in S_{n-1}$.
\end{proof}
The commutative associative operad, $\Com$, is
generated by a formal commutative product.
We denote by $1\ot 1$ the commutative product.
$$
\Com:=\T(1\ot 1)/(R_{\Com}).
$$
The quadratic relation $R_{\Com}$ is the associative law,
$$
(1\ot 1)\ot 1-1\ot(1\ot 1)=0.
$$
We obtain the following expression of $\Com$.
\begin{eqnarray*}
\Com(1)&=&\mathbb{K},\\
\Com(2)&=&<1\ot 1>,\\
\Com(3)&=&<1\ot 1\ot 1>,\\
\cdots&\cdots&\cdots.
\end{eqnarray*}
It is obvious that $\Com(n)\cong\mathbb{K}$ for each $n$.\\
\indent
In the final of this subsection, we recall some basic concepts in algebraic operad theory.
\begin{definition}[operadic suspension]\label{jyunbi1}
If $\P=(\P(n))$ is an operad, the shifted operad $\s\P$ is defined by
$$
(\s\P)(n)\cong s^{-1}\ot\P(n)\ot s^{\ot n}\cong s^{n-1}\P(n)\ot\sgn_{n},
$$
where $\sgn_{n}$ is the sign representation of $S_{n}$.
The inverse of $\s$, $\s^{-1}$, is defined by the same manner.
\end{definition}
\begin{definition}[dg operad]\label{jyunbi2}
By definition, a differential graded operad, or shortly dg operad,
is an operad such that for each $n$
$(\P(n),d)$ is a complex and the differential is compatible with
the operad structure, i.e., it is equivariant and satisfies
the usual condition,
$$
d(p_{m}\c_{i}p_{n})=dp_{m}\c_{i}p_{n}+(-1)^{|p_{m}|}p_{m}\c_{i}dp_{n},
$$
where $p_{m}\in\P(m)$ and $p_{n}\in\P(n)$.
\end{definition}
\begin{definition}[Koszul dual operad]\label{jyunbi3}
Let $\P=\T(E)/(R)$ be a binary quadratic operad with $\P(2)=E$.
We put $E^{\vee}:=E^{*}\ot\sgn_{2}$.
The Koszul dual of $\P$ is by definition
$$
\P^{!}:=\T(E^{\vee})/(R^{\bot}),
$$
where $R^{\bot}$ is the orthogonal space of $R$.
\end{definition}
It is obvious that $\P^{!!}\cong\P$.
It is well-known that $\Lie^{!}\cong\Com$.

\subsection{(Sh) Leibniz operad}

We recall the notion of (sh) Leibniz algebras.
\begin{definition}[Leibniz/Loday algebras \cite{Lod1,Lod2}]
A Leibniz algebra or Loday algebra $(L,[,])$ is by definition
a vector space equipped with a binary bracket product $[,]$
satisfying the Leibniz identity,
$$
[x_{1},[x_{2},x_{3}]]=[[x_{1},x_{2}],x_{3}]+[x_{2},[x_{1},x_{3}]],
$$
where $x_{\cdot}\in L$.
\end{definition}
The operad of Leibniz algebras is denoted by $\Leib$, which is a binary quadratic operad
generated by $[1,2]$ and $[2,1]$,
$$
\Leib:=\T([1,2],[2,1])/(R_{\Leib}),
$$
where $R_{\Leib}$ is the Leibniz identity.
If the degree of $[1,2]$ is odd, i.e., $[1,2]\in\s\Leib(2)$, then
the Leibniz identity has the following form,
$$
[1,[2,3]]=-[[1,2],3]-[2,[1,3]],
$$
which is called an odd Leibniz identity.\\
\indent
We recall sh Leibniz algebras (cf. Ammar and Poncin \cite{AP}) and its operad.
\begin{definition}[Koszul dual of Leibniz algebra \cite{Lod2}, Zinbiel \cite{Zinb}]
A Zinbiel algebra $(Z,*)$ is by definition
a vector space equipped with a binary product $*$
satisfying
$$
x_{1}*(x_{2}*x_{3})=(x_{1}*x_{2})*x_{3}+(x_{2}*x_{1})*x_{3},
$$
which is called a Zinbiel identity or dual Leibniz identity.
\end{definition}
The operad of Zinbiel algebras is denoted by $\Zinb$,
which is the Koszul dual of $\Leib$, that is, $\Zinb=\Leib^{!}$.
It is known that $\Zinb(n)\cong S_{n}$ for each $n$ (\cite{Lod2}).\\
\begin{lemma}[\cite{AP}]
The cofree\footnote{in the category of nilpotent coalgebras.}
Zinbiel coalgebra over a space $V$ is the tensor space
$$
\bar{T}^{c}V:=V\oplus V^{\ot 2}\oplus\cdots
$$
equipped with the coproduct defined by
$$
\Delta(x_{1},...,x_{n+1}):=\sum_{i=1}^{n}\sum_{\sigma}\epsilon(\sigma)
(x_{\sigma(1)},...,x_{\sigma(i)})\ot(x_{\sigma(i+1)},...,x_{\sigma(n)},x_{n+1}),
$$
where $\epsilon(\sigma)$ is a Koszul sign and
$\sigma$s are $(i,n-i)$-unshuffles permutations, that is,
$\sigma(1)<\cdots<\sigma(i)$ and $\sigma(i+1)<\cdots<\sigma(n)$.
\end{lemma}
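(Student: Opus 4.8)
The plan is to read off the underlying space from the general construction of cofree conilpotent coalgebras and then to pin down the comultiplication explicitly. Since $\Zinb=\Leib^{!}$ and, as recalled above, $\Zinb(n)\cong\mathbb{K}[S_{n}]$ (the regular representation) for every $n$, the cofree conilpotent $\Zinb$-coalgebra over $V$ has underlying space
$$
\bigoplus_{n\ge 1}\Zinb(n)\ot_{S_{n}}V^{\ot n}\;\cong\;\bigoplus_{n\ge 1}\mathbb{K}[S_{n}]\ot_{S_{n}}V^{\ot n}\;\cong\;\bigoplus_{n\ge 1}V^{\ot n}\;=\;\bar{T}^{c}V ,
$$
exactly as for the cofree coassociative coalgebra. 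Thus only the coproduct has to be identified, and the quickest way to do so is to dualise Loday's description of the \emph{free} Zinbiel algebra: it is the reduced tensor module $\bar{T}(W)$ with the half-shuffle product $u*(v_{1}\cdots v_{q})=\mathrm{sh}(u,\,v_{1}\cdots v_{q-1})\,v_{q}$, in which the last letter of the right factor stays last and the remaining letters are shuffled. Since each tensor degree is finite dimensional, transposing this product along $W=V^{*}$ turns shuffles into $(i,n-i)$-unshuffles and the pinned last letter into the clause that $x_{n+1}$ always ends the right tensor factor, which is precisely the $\Delta$ of the statement, Koszul signs included.

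Alternatively, and self-containedly, one checks directly that $(\bar{T}^{c}V,\Delta)$ is a conilpotent Zinbiel coalgebra with the required universal property. Conilpotency is immediate, because $\Delta$ sends $V^{\ot(n+1)}$ into $\bigoplus_{a+b=n+1,\,a,b\ge 1}V^{\ot a}\ot V^{\ot b}$, so the filtration $F_{r}=\bigoplus_{n\le r}V^{\ot n}$ is exhaustive and every iterate of $\Delta$ strictly lowers tensor degree, hence vanishes eventually on any element. The Zinbiel co-identity to verify is the transpose of the dual Leibniz identity $x_{1}*(x_{2}*x_{3})=(x_{1}*x_{2})*x_{3}+(x_{2}*x_{1})*x_{3}$, namely
$$
(\mathrm{id}\ot\Delta)\c\Delta=(\Delta\ot\mathrm{id})\c\Delta+(\tau\ot\mathrm{id})\c(\Delta\ot\mathrm{id})\c\Delta ,
$$
where $\tau$ flips the first two tensor factors with the Koszul sign. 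Evaluated on $(x_{1},\dots,x_{n+1})$, both sides expand as signed sums over pairs of nested unshuffles of $\{1,\dots,n\}$ keeping $x_{n+1}$ at the end, and the identity reduces to a sign-preserving bijection between these two indexing sets. This unshuffle bookkeeping is the one genuinely computational point.

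It remains to establish the universal property. Given a conilpotent Zinbiel coalgebra $(C,\delta)$ and a linear map $\varphi\colon C\to V$, one sets $\Phi:=\sum_{n\ge 1}\varphi^{\ot n}\c\delta^{(n)}$, where $\delta^{(n)}\colon C\to C^{\ot n}$ is the left-combed iterate $(\delta\ot\mathrm{id}^{\ot(n-2)})\c\cdots\c(\delta\ot\mathrm{id})\c\delta$, chosen to match the right-normed basis of $\Zinb(n)$; by conilpotency this sum is finite on every element. Then $\Phi$ is shown to be a morphism of Zinbiel coalgebras — here the co-identity above is used to reorganise the iterates $\delta^{(n)}$ — and the composite of $\Phi$ with the projection $\bar{T}^{c}V\to V$ onto the first summand recovers $\varphi$; uniqueness follows because any Zinbiel coalgebra morphism $C\to\bar{T}^{c}V$ lifting $\varphi$ is forced to have $n$-th component $\varphi^{\ot n}\c\delta^{(n)}$. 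The hard part throughout is purely combinatorial: keeping track of unshuffles and Koszul signs in this non-coassociative setting, most delicately when proving that $\Phi$ respects the coproducts.
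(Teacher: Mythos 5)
The paper gives no proof of this lemma at all: it is recalled verbatim from Ammar--Poncin \cite{AP}, so there is no internal argument to compare yours against. That said, your proposal is essentially correct and follows the standard route. The identification of the underlying space via $\Zinb(n)\cong\mathbb{K}[S_{n}]$ and $\mathbb{K}[S_{n}]\ot_{S_{n}}V^{\ot n}\cong V^{\ot n}$ is right, and dualising Loday's presentation of the free Zinbiel algebra as $\bar{T}(W)$ with the half-shuffle product does yield exactly the stated $\Delta$; you have also chosen the correct convention (last letter of the right factor pinned), which is the one matching the Zinbiel identity $x_{1}*(x_{2}*x_{3})=(x_{1}*x_{2})*x_{3}+(x_{2}*x_{1})*x_{3}$ as written in this paper. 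Two caveats. First, the duality between the free algebra and the cofree conilpotent coalgebra should be invoked arity by arity and extended to arbitrary $V$ by functoriality of polynomial functors, since $V^{\ot n}$ is finite dimensional only when $V$ is; this is a minor imprecision, not a failure. Second, in your self-contained alternative the co-relation $(\mathrm{id}\ot\Delta)\c\Delta=(\Delta\ot\mathrm{id})\c\Delta+(\tau\ot\mathrm{id})\c(\Delta\ot\mathrm{id})\c\Delta$ is indeed the Zinbiel co-relation the paper itself records, but the two steps you flag as the genuinely computational points (the sign-preserving bijection of nested unshuffles, and the verification that $\Phi=\sum\varphi^{\ot n}\c\delta^{(n)}$ respects the coproducts) are announced rather than carried out, so that route is an outline rather than a proof. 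Since your first route is complete modulo the classical description of the free Zinbiel algebra, which is in the cited literature, the proposal as a whole stands.
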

Let $\Coder(\bar{T}^{c}V)$ be the space of coderivations on the coalgebra.
\begin{definition}[\cite{AP}]
Let $\partial\in\Coder(\bar{T}^{c}V)$ be a coderivation of degree $+1$ on the coalgebra.
The pair $(V,\partial)$ is called an sh Leibniz algebra
or sh Loday algebra, if $\partial^{2}=0$, that is, codifferential.
\end{definition}
In general, the structure of sh Leibniz algebra $\partial$ has the form of deformation,
$$
\partial=\partial_{1}+\partial_{2}+\cdots.
$$
For each $j$, the coderivation $\partial_{j}$ is on $V^{\ot j}$ identified to
a linear map of $\partial_{j}:V^{\ot j}\to V$, and on $V^{\ot n\ge j}$ it satisfies
$$
\partial_{j}(x_{1},...,x_{n})=
\sum_{k=j}^{n}\sum_{\sigma}(\pm)(x_{\sigma(1)},...,x_{\sigma(k-j)},
\partial_{j}(x_{\sigma(k-j+1)},...,x_{\sigma(k-1)},x_{k}),x_{k+1},...,x_{n}),
$$
where $(\pm)$ is an appropriate sign and $\sigma$ are $(k-j,j-1)$-unshuffle permutations.
The defining condition of sh Leibniz algebras, $\partial^{2}=0$, is equivalent to
$$
[\partial_{1},\partial_{n}]+\sum_{i+j-1=n}\partial_{i}\partial_{j}=0.
$$
\indent
There is an easy method of making sh Leibniz algebras
(so-called higher derived bracket construction {\em on the level of algebra}.)
\begin{proposition}[\cite{U2}]\label{proplemma}
Let $(\g,\{,\},d_{0})$ be a dg Lie algebra
with a differential $d_{0}$. There exists a Lie algebra homomorphism
$$
N:\Der(\g)[[\hbar]]\to\Coder(\bar{T}^{c}\g),
$$
where $\Der(\g)$ is the space of derivations on $\g$.
Suppose that $d_{\hbar}:=d_{0}+\hbar d_{1}+\hbar^{2}d_{2}+\cdots$
is a deformation differential of $d_{0}$.
We define $\partial_{n+1}:=N(\hbar^{n}d_{n})$ for each $n$.
Then $\partial=\partial_{1}+\partial_{2}+\cdots$
becomes an sh Leibniz algebra structure.
\end{proposition}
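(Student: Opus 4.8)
The plan is to construct the map $N$ explicitly, to prove that it is a morphism of graded Lie algebras by comparing the arity components $\g^{\ot n}\to\g$ of its two sides, and then to read off the sh Leibniz structure on $\g$ as $N(d_{\hbar})$; this is the operator (derivation) version, in the Leibniz rather than the $L_{\infty}$ setting, of the higher derived bracket theorem (cf.\ Roytenberg \cite{Roy}, Voronov \cite{Vo}). For the construction: a coderivation of the cofree (nilpotent) Zinbiel coalgebra $\bar{T}^{c}\g=\g\oplus\g^{\ot 2}\oplus\cdots$ is uniquely determined by its corestriction, i.e.\ by its arity components $\g^{\ot n}\to\g$, from which it is recovered by the $\partial_{j}$-formula recalled above. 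For $D\in\Der(\g)$ and $m\geq 0$, let $N(\hbar^{m}D)\in\Coder(\bar{T}^{c}\g)$ be the coderivation of degree $|D|$ whose corestriction is concentrated in arity $m+1$ and there equals the iterated derived bracket
$$
b^{D}_{m+1}(x_{1},\dots,x_{m+1}):=\{\cdots\{\{Dx_{1},x_{2}\},x_{3}\},\dots,x_{m+1}\}
$$
(with the Koszul signs of the graded bracket). Extend $N$ linearly and continuously by $N(\sum_{m}\hbar^{m}D_{m}):=\sum_{m}N(\hbar^{m}D_{m})$, which is a well-defined coderivation since the summands occupy pairwise distinct arities. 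Then $N(d_{\hbar})$ is precisely the coderivation $\partial=\partial_{1}+\partial_{2}+\cdots$ with $\partial_{m+1}=N(\hbar^{m}d_{m})$ of the statement. (Equivalently, $N$ is the restriction, along the ideal inclusion $\g\hookrightarrow\mathfrak{L}:=\Der(\g)\ltimes\g$, of the universal assignment $\xi\mapsto$ [the coderivation of $\bar{T}^{c}\mathfrak{L}$ with corestrictions $(y_{1},\dots,y_{k})\mapsto[[\cdots[\xi,y_{1}],\dots],y_{k}]$], which preserves $\bar{T}^{c}\g$ because $\g$ is an ideal.)

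The heart of the proof is that $N$ is a homomorphism of graded Lie algebras, where $\Coder(\bar{T}^{c}\g)$ carries the graded commutator of coderivations. By bilinearity it suffices to prove $N([\hbar^{m}D,\hbar^{n}E])=[N(\hbar^{m}D),N(\hbar^{n}E)]$ for $D,E\in\Der(\g)$; since $\hbar$ is central, $[\hbar^{m}D,\hbar^{n}E]=\hbar^{m+n}[D,E]$, and both sides are coderivations concentrated in arity $m+n+1$, so it is enough to compare their corestrictions $\g^{\ot(m+n+1)}\to\g$. Expanding
$$
N(\hbar^{m}D)\circ N(\hbar^{n}E)-(-1)^{|D||E|}N(\hbar^{n}E)\circ N(\hbar^{m}D)
$$
and projecting onto $\g$ by means of the $\partial_{j}$-formula produces a signed sum of iterated brackets in which one of the two ``heads'' $D,E$ is nested inside the leftmost slot of the other, together with terms in which the two heads act on disjoint arguments. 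Using that $D$ and $E$ are derivations of $\{,\}$, one rewrites the nested contributions; the part in which both heads fall on $x_{1}$ assembles into
$$
\hbar^{m+n}\{\cdots\{(DE-(-1)^{|D||E|}ED)x_{1},x_{2}\},\dots,x_{m+n+1}\}=\hbar^{m+n}\,b^{[D,E]}_{m+n+1}(x_{1},\dots,x_{m+n+1}),
$$
which is exactly the corestriction of $N(\hbar^{m+n}[D,E])$, while every other term cancels against a partner from the opposite ordering or from a Leibniz expansion of one head across a bracket. The cancellation is governed by the graded Jacobi identity for $\{,\}$ — rearranging patterns $\{\{a,b\},c\}$ — together with the derivation property, and can be organized as an induction on the arity $k=m+n+1$ (the base case $k=1$ being the standard fact that $D\mapsto\hat D$ is a Lie homomorphism on arity-$1$ coderivations, the step using the recursion $b^{X}_{k}(x_{1},\dots,x_{k})=\{b^{X}_{k-1}(x_{1},\dots,x_{k-1}),x_{k}\}$ and one application of Jacobi). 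This sign-and-combinatorics verification is the main obstacle; its only non-formal inputs are the Jacobi identity in $\g$ and the fact that $[D,E]$ is again a derivation.

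Granting the homomorphism property, the proposition follows at once. That $d_{\hbar}$ is a deformation differential means $d_{\hbar}\circ d_{\hbar}=0$; since $|d_{\hbar}|=1$ this is equivalent to $[d_{\hbar},d_{\hbar}]=0$ in $\Der(\g)[[\hbar]]$. Applying $N$ gives $[\partial,\partial]=[N(d_{\hbar}),N(d_{\hbar})]=N([d_{\hbar},d_{\hbar}])=0$ in $\Coder(\bar{T}^{c}\g)$, hence $\partial^{2}=\frac{1}{2}[\partial,\partial]=0$. Thus $\partial$ is a codifferential of degree $+1$ on $\bar{T}^{c}\g$, i.e.\ an sh Leibniz structure on $\g$; reading off the arity-$n$ component of the equation $[\partial,\partial]=0$ reproduces the structure equations $[\partial_{1},\partial_{n}]+\sum_{i+j-1=n}\partial_{i}\partial_{j}=0$ recalled above, and $\partial_{m+1}=N(\hbar^{m}d_{m})$ is the higher derived bracket $\{\cdots\{d_{m}(1),2\},\dots,m+1\}$ of the introduction.
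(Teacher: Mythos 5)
Your proposal is correct and follows the same route as the paper, which itself only sketches the argument by writing down the map $N(\hbar^{n}D)(x_{1},\dots,x_{n+1})=\{\{\{Dx_{1},x_{2}\},\dots\},x_{n+1}\}$ and deferring the verification that $N$ is a Lie algebra homomorphism to \cite{U2}; you supply the same construction together with a reasonable outline of that verification and the correct deduction $\partial^{2}=\tfrac{1}{2}[N(d_{\hbar}),N(d_{\hbar})]=\tfrac{1}{2}N([d_{\hbar},d_{\hbar}])=0$. The only part left schematic in your write-up (the Jacobi-plus-derivation cancellation establishing the homomorphism property) is exactly the part the paper also leaves to the cited reference, so there is no gap relative to the paper's own proof.
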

\begin{proof}(Sketch)
The map of the proposition is defined as the higher derived bracket,
$$
N(\hbar^{n}D)(x_{1},x_{2},...,x_{n+1}):=\{\{\{Dx_{1},x_{2}\},...\},x_{n+1}\},
$$
for any $\hbar^{n}D\in\Der(\g)[[\hbar]]$.
\end{proof}
But now to our next task.
We consider an $\S$-module $s\ol{\Zinb^{*}}:=(s\Zinb^{*}(n))$ with $n\ge 2$,
where $\Zinb^{*}(n)$ is the dual space of $\Zinb(n)$.
We denote by $T_{n}:=T_{n}(1,2,...,n)$ the generator of $s\Zinb^{*}(n)$.
Let $\T(s\ol{\Zinb^{*}})$ be the free operad over the $\S$-module.
This operad is generated from $(T_{n})$.
One can define a differential, $d_{t}$, on the free operad by
\begin{eqnarray}
\label{TT01}d_{t}T_{2}&:=&0,\\
\label{TT02}d_{t}T_{n}+\sum_{i+j-1=n}T_{i}T_{j}&:=&0,
\end{eqnarray}
where
\begin{equation}\label{TT03}
T_{i}T_{j}:=\sum_{k=j}^{n}\sum_{\sigma}(T_{i}\c_{k-j+1}T_{j})
(\sigma(1),...,\sigma(k-j),\sigma(k-j+1),...,\sigma(k-1),k,k+1,...,n),
\end{equation}
where $k$, $\sigma$ are the same as above.
It is easy to check $d_{t}d_{t}=0$.
This differential is called a \textbf{tree differential}.
\begin{definition}[sh Leibniz operad]\label{defshleib}
$\s\Leib_{\infty}:=\Big(\T(s\ol{\Zinb^{*}}),d_{t}\Big)$.
\end{definition}
Hence $\Leib_{\infty}=\s^{-1}\T(s\ol{\Zinb^{*}})$.
\medskip\\
\indent
In the final of this section, we recall the concept of tree.
A planar rooted tree with $n$-leaves
is by definition a directed graph with
$n$-leaves (input edges),
$1$-root (output edge)
and without loop (See Fig 1).
A labeled planar rooted tree is a planar rooted tree
whose leaves are labeled by natural numbers.
\begin{figure}[h]
\begin{center}
\includegraphics[width=80mm]{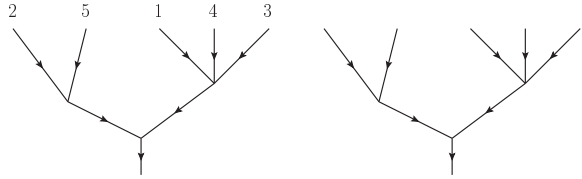}
\end{center}
\caption{labeled $5$-tree and non-labeled one}
\end{figure}
\begin{lemma}\label{firstlemma}
As an operad, up to degree, $\s\Leib_{\infty}$ is isomorphic to
the labeled planar rooted trees, i.e.,
$\s\Leib_{\infty}(n)$ is linearly isomorphic to
the space of labeled planar rooted trees with $n$-leaves.
\end{lemma}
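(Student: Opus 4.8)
The plan is to use the phrase ``up to degree'' to discard the internal grading, hence also the tree differential $d_{t}$ and all the Koszul signs, so that the statement reduces to a description of the underlying plain operad of $\s\Leib_{\infty}$, i.e.\ of the free operad $\T(s\ol{\Zinb^{*}})$. The argument then has three steps: identify the generating $\S$-module, recall the combinatorial basis of a free operad over such a module, and check that operadic composition becomes grafting of trees.

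First I would pin down $s\ol{\Zinb^{*}}$. By Loday's computation recalled above, $\Zinb(n)\cong\mathbb{K}[S_{n}]$ is the regular representation for every $n\ge 1$; dualizing, $\Zinb^{*}(n)\cong\mathbb{K}[S_{n}]$ as well. Hence, forgetting the shift $s$ and the suspension/sign twists, $s\ol{\Zinb^{*}}(n)$ is the regular $S_{n}$-module for $n\ge 2$ and $s\ol{\Zinb^{*}}(1)=0$; the only feature used below is that it is a \emph{free} $S_{n}$-module. Second I would invoke the standard description of the free operad: a basis of $\T(E)(n)$ is indexed by rooted trees with $n$ leaves bijectively labeled by $\{1,\dots,n\}$, all internal vertices of arity $\ge 2$, each internal vertex $v$ decorated by a basis element of $E(\mathrm{in}(v))$, where the $S_{k}$-action on $E(k)$ is identified with the permutation action on the $k$ input edges at a $k$-ary vertex. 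When $E(k)=\mathbb{K}[S_{k}]$ such a decoration amounts, after fixing a reference order on the input edges, to a total order on them; carrying this out at every vertex turns the decorated tree into a planar rooted tree. Thus $\T(s\ol{\Zinb^{*}})(n)$ has a basis indexed by labeled planar rooted trees with $n$ leaves (equivalently, $\T(\mathbb{K}[S_{\bullet}])$ is the operad of labeled planar rooted trees, its planar part being the free nonsymmetric operad on $\mathbb{K}$ in each arity $\ge 2$), which is the asserted linear isomorphism. As a numerical check this yields $\dim\s\Leib_{\infty}(n)=n!\,r(n)$, with $r(n)$ the number of unlabeled planar rooted trees with $n$ leaves and internal arities $\ge 2$; one computes $r(1)=1$ (consistent with $\T(s\ol{\Zinb^{*}})(1)\cong\mathbb{K}$), $r(2)=1$, $r(3)=3$, $r(4)=11$, i.e.\ the small Schr\"oder numbers $s(n)$ of Table~1. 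Third, under this identification the partial composition $\c_{i}$ of the free operad is ``plug the second tree into the $i$th leaf of the first and renumber the leaves'', which is exactly the grafting of labeled planar rooted trees, and the $S_{n}$-action is the relabeling action; so the linear isomorphism is in fact an isomorphism of operads once degrees are ignored.

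The step that needs real care is the second one: one must check that, under Loday's identification $\Zinb(k)\cong\mathbb{K}[S_{k}]$, the $S_{k}$-action occurring in the free-operad construction (permutation of the $k$ inputs at a vertex) is indeed the regular action, and that the local input-orderings glue to a \emph{well-defined global} planar structure independent of the reference orders chosen. This is routine bookkeeping rather than a genuine difficulty. Finally, tracking where the suspension $s$ and the signs $\sgn_{n}$ enter would make precise the sense in which the isomorphism holds only ``up to degree'', and, of course, it does not intertwine $d_{t}$ with the zero differential on trees — that differential being precisely what the following sections analyze.
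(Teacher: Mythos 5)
Your proposal is correct and follows the same route as the paper: the paper's (one-line) proof likewise identifies the generator $T_{n}\in s\Zinb^{*}(n)$, via $\Zinb(n)\cong S_{n}$, with a labeled corolla ($n$ leaves, one internal vertex), whence the free operad over these generators is the operad of labeled planar rooted trees. You have simply spelled out the standard free-operad combinatorics and the Schr\"oder-number sanity check that the paper leaves implicit.
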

\begin{proof}
Since $\Zinb(n)\cong S_{n}$,
the generator $T_{n}\in s\Zinb^{*}(n)$ is identified to
a labeled planar rooted tree with $n$-leaves and with $1$-internal vertex (See Fig 2.)
\begin{figure}[h]
\begin{center}
\includegraphics[width=40mm]{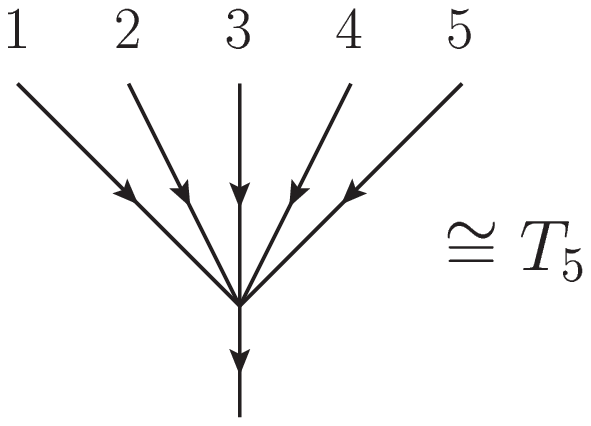}
\end{center}
\caption{$T_{5}$}
\end{figure}
\end{proof}
The lemmas above will be used in Section 4.

\section{Deformation operad}

\subsection{Permutation algebras and the operad $\Perm$}

A permutation algebra introduced by Chapoton \cite{Chap}
is by definition an associative algebra $(A,*)$ satisfying
$$
(a_{1}*a_{2})*a_{3}=a_{1}*(a_{2}*a_{3})=(a_{2}*a_{1})*a_{3}.
$$
The operad of permutation algebras is denoted by $\Perm$,
which is also a binary quadratic operad
$$
\Perm:=\T(1*2,2*1)/(R_{\Perm}).
$$
\indent
We recall a construction of $\Perm$ with a formal differential (\cite{U1}).
Let $d_{0}$ be a 1-ary operator of degree $+1$
and let $1\ot 1\in\Com(2)$ be a binary commutative product of degree $0$.
Let $\T(d_{0},1\ot 1)$ be the free operad over $(d_{0},1\ot 1)$.
We define a quadratic operad $\Q$,
$$
\Q:=\T(d_{0},1\ot 1)/(R_{\Q}),
$$
where $R_{\Q}$ is the space of three quadratic relations,
\begin{eqnarray*}
(1\ot 1)\ot 1&=&1\ot(1\ot 1),\\
d_{0}(1\ot 1)&=&d_{0}\ot 1+1\ot d_{0},\\
d_{0}d_{0}&=&0.
\end{eqnarray*}
The operad $\Q$ is a graded operad, $\Q=(\Q^{i})$,
whose degree is defined as the number of $d_{0}$.
In \cite{U1}, it was proved that the operad $\s\Perm$ is isomorphic to the suboperad of $\Q$
whose $n$th component is $\Q^{n-1}(n)$, that is,
\begin{lemma}
$\s\Perm\cong(\Q^{n-1}(n))$.
\end{lemma}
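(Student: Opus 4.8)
# Proof Proposal

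The plan is to construct the isomorphism $\s\Perm \cong (\Q^{n-1}(n))$ explicitly by exhibiting a basis of the degree-$(n-1)$ part of $\Q(n)$ and matching it with the known basis of $\s\Perm(n)$. First I would recall that $\Perm(n)$ has dimension $n$, with a basis indexed by the choice of which of the $n$ labels sits in the distinguished "right-absorbing" position; equivalently $\Perm(n) \cong \mathbb{K}^n$ as a vector space, with $S_n$ permuting the basis elements. After operadic suspension the underlying spaces are unchanged up to a degree shift and a sign twist, so it suffices to produce $n$ linearly independent elements of $\Q^{n-1}(n)$ that transform correctly and to show they span.

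The key computational step is a normal-form argument in the free operad $\T(d_0, 1\ot 1)$ modulo $R_\Q$. Using the relation $(1\ot 1)\ot 1 = 1\ot(1\ot 1)$, every monomial built only from the commutative product collapses to the single element $1^{\ot n} \in \Q^0(n)$, so $\Q^0(n) \cong \Com(n) \cong \mathbb{K}$. Then I would use the Leibniz-type relation $d_0(1\ot 1) = d_0\ot 1 + 1\ot d_0$ repeatedly to push every occurrence of $d_0$ inward, past the commutative multiplications, until each $d_0$ is applied directly to a single leaf; combined with $d_0 d_0 = 0$, this shows that any degree-$k$ element of $\Q(n)$ is a linear combination of "decorated" commutative products $1^{\ot n}$ in which $k$ distinct leaves have been hit once by $d_0$. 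In degree $k = n-1$ this forces exactly one leaf to be left undecorated, giving precisely $n$ basis elements $\delta_i := $ (the commutative product of $n$ leaves with $d_0$ applied to all leaves except the $i$th), and one checks $S_n$ acts on $\{\delta_i\}$ by permuting indices.

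Next I would verify that the composition products among the $\delta_i$ match those of $\s\Perm$: composing $\delta_i \in \Q^{n-1}(n)$ with $\delta_j$ at a leaf slot, and pushing all $d_0$'s inward as above using the Leibniz rule, yields (up to the Koszul sign coming from the suspension) another decorated commutative product with exactly one undecorated leaf — and the position of that leaf tracks the right-absorbing element exactly as in $\Perm$; any term that would require two $d_0$'s on one leaf vanishes by $d_0 d_0 = 0$, which is the operadic shadow of the $\Perm$ relation $(a_1 * a_2)*a_3 = (a_2 * a_1)*a_3$. This makes the linear bijection $\delta_i \mapsto$ (the $i$th basis element of $\s\Perm(n)$) into an operad morphism, hence an isomorphism.

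The main obstacle I anticipate is the bookkeeping in the normal-form reduction: showing that the "push $d_0$ inward" procedure is confluent — i.e., that the resulting decorated-product expression is independent of the order in which one applies the Leibniz relation and the associativity/commutativity relations — and that it genuinely produces a spanning set of the claimed size rather than something smaller because of an unnoticed relation. One clean way to sidestep a messy rewriting argument is to count: one shows $\dim \Q^{n-1}(n) \le n$ by the reduction above, and $\dim \Q^{n-1}(n) \ge n$ by exhibiting the $n$ elements $\delta_i$ as linearly independent (e.g. via their images under the $S_n$-action or a direct pairing), forcing equality and collapsing all ambiguity. Since this $\Q$ and the identification of $\s\Perm$ inside it are already established in \cite{U1}, in the write-up I would lean on that reference and present only the normal-form dictionary needed for the sequel, namely that $\Q^{n-1}(n)$ is spanned by the $\delta_i$.
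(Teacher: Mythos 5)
Your proposal is correct and follows essentially the same route as the paper's (sketched) proof: identify the basis of $\Q^{n-1}(n)$ as the commutative $n$-fold products with exactly one undecorated leaf, verify the odd associativity and permutation relations of $\s\Perm$ on these generators, and conclude by a dimension count; the paper merely records the two quadratic relation checks and defers the rest to \cite{U1}. Your additional attention to the normal-form reduction (confluence of pushing $d_{0}$ inward, and the lower bound $\dim\Q^{n-1}(n)\ge n$ via linear independence of the $\delta_{i}$) fills in exactly the details the paper leaves implicit.
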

\begin{proof}(Sketch)
We check that $(\Q^{n-1}(n))$ satisfies the relation of $\s\Perm$.
The odd version of associative law is
$$
(d_{0}\ot 1)\c_{1}(d_{0}\ot 1)=-(d_{0}\ot d_{0}\ot 1)=-(d_{0}\ot 1)\c_{2} (d_{0}\ot 1)
$$
and the odd premutation relation is
$$
(d_{0}\ot 1)\c_{1}(d_{0}\ot 1)=-(d_{0}\ot d_{0}\ot 1)=-(d_{0}\ot 1)\c_{1}(1\ot d_{0}).
$$
\end{proof}
From this, we obtain the following expression of $\s\Perm$,
\begin{eqnarray*}
\s\Perm(1)&=&\mathbb{K},\\
\s\Perm(2)&=&<d_{0}\ot 1 \ , \ 1\ot d_{0}>,\\
\s\Perm(3)&=&<d_{0}\ot d_{0} \ot 1 \ , \ d_{0}\ot 1\ot d_{0} \ , \ 1\ot d_{0}\ot d_{0}>,\\
\cdots &\cdots& \cdots.
\end{eqnarray*}
The proposition below is the binary model of the main theorem of this note.
\begin{proposition}[Chapoton \cite{Chap} see also Vallette \cite{Val}]\label{LLP}
$\s\Leib\cong\Lie\ot\s\Perm$.
\end{proposition}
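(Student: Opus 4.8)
The plan is to build an explicit surjective morphism of operads $\phi\colon\s\Leib\to\Lie\ot\s\Perm$ and then to finish by a dimension count. Here $\ot$ is the arity-wise (Hadamard) tensor product of operads, $(\P\ot\Q)(n)=\P(n)\ot_{\mathbb K}\Q(n)$, with diagonal $S_n$-action and the sign-corrected arity-wise composition $(a\ot x)\c_i(b\ot y)=(-1)^{|x||b|}(a\c_i b)\ot(x\c_i y)$. I would use the presentation $\s\Leib=\T(E)/(R_{\Leib})$, with $E=\langle[1,2],[2,1]\rangle\cong\mathbb K[S_2]$ the odd generating module and $R_{\Leib}$ the odd Leibniz relator, together with the model $\s\Perm\cong(\Q^{n-1}(n))$, $\Q=\T(d_0,1\ot 1)/(R_{\Q})$, established above.

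First I would set $\phi([1,2]):=\{1,2\}\ot(d_0\ot 1)$. Since $E$ is the regular $S_2$-module, this prescription extends uniquely to an $S_2$-equivariant map $E\to(\Lie\ot\s\Perm)(2)$, hence to a morphism of free operads $\T(E)\to\Lie\ot\s\Perm$; being an operad morphism, $\phi$ kills the ideal $(R_{\Leib})$ as soon as it kills the single relator $[1,[2,3]]+[[1,2],3]+[2,[1,3]]$ in arity $3$, the other elements of $R_{\Leib}$ being its $S_3$-translates. Checking this one identity inside $\Lie\ot\s\Perm$ is precisely the binary derived bracket computation (cf.\ Proposition~\ref{proplemma} and \cite{Kos1}): writing $[x,y]$ for $\{x,y\}\ot(d_0\ot 1)$, one expands $[[x,y],z]$ using $d_0(1\ot 1)=d_0\ot 1+1\ot d_0$ and $d_0d_0=0$ in $\Q$, rewrites $[x,[y,z]]$ by the Jacobi identity of $\Lie$, and verifies that the Koszul signs (from $|d_0|=1$ and from the Hadamard composition) assemble into exactly $[1,[2,3]]=-[[1,2],3]-[2,[1,3]]$. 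This makes $\phi$ a morphism of operads.

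Next I would prove surjectivity. The space $\s\Leib(n)=\Leib(n)$ is spanned by the $n!$ right-normed brackets $[x_{\sigma(1)},[x_{\sigma(2)},\ldots,[x_{\sigma(n-1)},x_{\sigma(n)}]\ldots]]$, $\sigma\in S_n$, since the Leibniz relation straightens any bracketing to such a sum; in particular $\dim\s\Leib(n)\le n!$. Iterating the definition of $\phi$ — and noting that in a right-normed word one always substitutes into the plain slot of $d_0\ot 1$, never into a $d_0$-slot, so no $d_0^2$-cancellation occurs — one gets
$$
\phi\bigl([x_{\sigma(1)},[\ldots,[x_{\sigma(n-1)},x_{\sigma(n)}]\ldots]]\bigr)
=\pm\,\{x_{\sigma(1)},\{\ldots,\{x_{\sigma(n-1)},x_{\sigma(n)}\}\ldots\}\}\ot e_{\sigma(n)},
$$
where $e_j\in\s\Perm(n)$ is the basis element carrying $d_0$ on every leaf but the plain leaf $j$. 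For a fixed value $\sigma(n)=j$ the $\Lie$-factors run over the right-normed Lie brackets with innermost leaf $j$, which, by the relabelled form of the spanning argument behind $\dim\Lie(n)=(n-1)!$, form a basis of $\Lie(n)$. Hence $\phi$ carries the $n!$ right-normed Leibniz words bijectively onto a basis of $\bigoplus_{j=1}^{n}\Lie(n)\ot e_j=(\Lie\ot\s\Perm)(n)$, a space of dimension $(n-1)!\cdot n=n!$. Together with $\dim\s\Leib(n)\le n!$ this forces $\phi$ to be a linear isomorphism in each arity, hence an isomorphism of operads (and incidentally one recovers $\dim\Leib(n)=n!$).

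The one genuinely delicate point is the sign bookkeeping in the middle step: one must track the Koszul signs of the Hadamard composition $\c_i$, the internal signs of $\Q$ coming from $|d_0|=1$, and the sign-representation twists hidden in the operadic suspensions $\s\Perm$ and $\s\Leib$, and confirm that together they produce the \emph{odd} Leibniz identity rather than the even one or a sign-corrupted variant. By contrast the surjectivity step is purely combinatorial — only the propagation of $d_0$-slots under right-normed substitution and the spanning set of $\Lie(n)$ enter — and the conclusion is a bare dimension count, so no further obstacles arise there.
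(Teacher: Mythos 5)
Your proposal is correct and follows essentially the same route as the paper's own proof: realize $\Lie\ot\s\Perm$ as derived brackets via $[1,2]\mapsto\{1,2\}\ot(d_{0}\ot 1)$, verify the odd Leibniz identity to get an operad morphism out of $\s\Leib$, and conclude by comparing dimensions $n\cdot(n-1)!=n!$ in each arity. The only (welcome) refinement is in the last step: where the paper simply cites $\dim\Leib(n)=n!$ and counts, you track the images of the $n!$ right-normed Leibniz words explicitly onto the basis $\bigoplus_{j}\Lie(n)\ot e_{j}$, which makes the argument self-contained and re-derives $\dim\Leib(n)=n!$ along the way.
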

We prove this proposition by using
the method of derived bracket construction.
\begin{proof}
The elements of $\Lie\ot\s\Perm$
are regarded as {\em derived brackets}, for example,
\begin{eqnarray*}
\{d_{0}(1),2\}&\cong&\{1,2\}\ot(d_{0}\ot 1),\\
\{1,d_{0}(2)\}&\cong&\{1,2\}\ot(1\ot d_{0}),\\
\{\{d_{0}(1),d_{0}(2)\},3\}&\cong&\{\{1,2\},3\}\ot(d_{0}\ot d_{0}\ot 1).
\end{eqnarray*}
The derivation of the bracket $d_{0}\{1,2\}$ is well-defined
as a linear combination of the derived brackets,
$$
d_{0}\{1,2\}:=\{d_{0}(1),2\}+\{1,d_{0}(2)\}.
$$
It is easy to see that $\s\Leib(2)\cong(\Lie\ot\s\Perm)(2)$
and that $\Lie\ot\s\Perm$ is generated by $(\Lie\ot\s\Perm)(2)$.
The derived bracket satisfies the odd Leibniz identity,
\begin{eqnarray*}
\{d_{0}(1),\{d_{0}(2),3\}\}&=&\{\{d_{0}(1),d_{0}(2)\},3\}-\{d_{0}(2),\{d_{0}(1),3\}\} \\
&=&-\{d_{0}\{d_{0}(1),2\},3\}-\{d_{0}(2),\{d_{0}(1),3\}\}.
\end{eqnarray*}
Hence there exists an operadic surjection $\psi:\s\Leib\to\Lie\ot\s\Perm$.
$$
\begin{CD}
\T(\s\Leib(2))@=\T\big((\Lie\ot\s\Perm)(2)\big)\\
@VVV @VVV \\
\s\Leib@>{\psi}>>\Lie\ot\s\Perm.
\end{CD}
$$
By a dimension counting, one can prove that this map is isomorphism.
Since $\s\Perm(n)\cong\Q^{n-1}(n)$, $\dim\s\Perm(n)=n$.
It is well-known that $\dim\Lie(n)=(n-1)!$ and $\dim\Leib(n)=n!$.
Hence $\dim\s\Leib(n)=\dim(\Lie\ot\s\Perm)(n)$ for each $n$.
\end{proof}

\subsection{Deformation of $\Perm$}

Let $V:=<d_{1},d_{2},...>$ be a space of 1-ary operators of degree $+1$
and let $1\ot 1\in\Com(2)$ the same as above.
Define a quadratic operad,
$$
\O:=\T(V,1\ot 1)/(R_{\O}),
$$
where $R_{\O}$ is the space of quadratic relations,
\begin{eqnarray*}
(1\ot 1)\ot 1&=&1\ot(1\ot 1),\\
d_{n}(1\ot 1)&=&d_{n}\ot 1+1\ot d_{n}, \ \ \forall n\in\mathbb{N}.
\end{eqnarray*}
We should remark that $\O(1)$ is the same as the tensor algebra over $V$,
\begin{equation}\label{defo1}
\O(1)=\mathbb{K}\oplus V\oplus{V}^{\ot 2}\oplus{V}^{\ot 3}\oplus\cdots.
\end{equation}
We define on the operad $\O$ the second degree which is called a {\em weight}.
The weight function is denoted by $w(-)$.
\begin{definition}[weight on $\O$]
$w(d_{n}):=n$ and $w(1^{\ot n}):=1-n$ for each $n$.
\end{definition}
Then $\O$ becomes a graded and weighted operad $\O=(\O^{g,w})$.
The degree and the weight are both additive with respect to the
operad structure of $\O$.
Hence the sub $\S$-module of weight $0$, $(\O^{\bullet,0})$, becomes
a suboperad of $\O$.
We introduce the main object of this note :
\begin{definition}[deformation operad]
$\D_{\infty}:=(\O^{\bullet,0})$.
\end{definition}
For each $n\in\mathbb{N}$,
$$
\D_{\infty}(n)=\O^{1,0}(n)\oplus\O^{2,0}(n)\oplus\cdots\oplus\O^{n-1,0}(n).
$$
The elementary parts of $\D_{\infty}$ have the following form,
\begin{eqnarray*}
\D_{\infty}(1)&=&\mathbb{K}.\\
\D_{\infty}(2)&=&<d_{1}\ot 1 \ , \ 1\ot d_{1}>.\\
\D_{\infty}^{1}(3)&=&<d_{2}\ot 1\ot 1 \ , \ 1\ot d_{2}\ot 1 \ , \ 1\ot 1\ot d_{2}>.\\
\D_{\infty}^{2}(3)&=&<d_{1}\ot d_{1}\ot 1 \ , \ d_{1}\ot 1\ot d_{1} \ , \ 1\ot d_{1}\ot d_{1} \ , \ \\
& & \ \ \ \ \ d_{1}^{2}\ot 1\ot 1 \ , \ 1\ot d_{1}^{2}\ot 1 \ , \ 1\ot 1\ot d_{1}^{2}>.\\
\cdots&\cdots&\cdots.
\end{eqnarray*}
It is obvious that $\D^{1}_{\infty}(n)\cong s\mathbb{K}^{n}$ for each $n$.\\
\indent
One can easily prove that
\begin{proposition}\label{proofap}
The deformation operad is generated by $\D_{\infty}^{1}$.
\end{proposition}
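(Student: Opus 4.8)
The plan is to work with the normal form for $\O$ in which all the $d_{n}$'s have been moved above the commutative products. Using the relation $d_{n}(1\ot 1)=d_{n}\ot 1+1\ot d_{n}$ together with the associativity of $1\ot 1$, every element of $\O(n)$ is a $\mathbb{K}$-linear combination of monomials $(u_{1},\dots,u_{n})$, by which I mean the $n$-ary product $1^{\ot n}$ with a word $u_{i}\in\O(1)=\mathbb{K}\oplus V\oplus V^{\ot 2}\oplus\cdots$ stacked over the $i$-th leaf. Such a monomial has degree $\sum_{i}|u_{i}|$ (the total number of $d$'s) and weight $\sum_{i}w(u_{i})-(n-1)$, so for each $g$ the weight-$0$, degree-$g$ monomials form a basis of $\D_{\infty}^{g}(n)$; in particular a basis of $\D_{\infty}^{1}(n)$ is given by the monomials carrying a single letter $d_{n-1}$ over one leaf, which are exactly the generators. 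I would then prove $\D_{\infty}^{g}\subseteq\langle\D_{\infty}^{1}\rangle$ (the suboperad generated by $\D_{\infty}^{1}$) by induction on $g$, the case $g=1$ being the definition of the generating collection.

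For the inductive step take a weight-$0$ monomial $x=(u_{1},\dots,u_{n})$ of degree $g\ge 2$, pick a leaf $l$ with $u_{l}\neq\emptyset$, and let $d_{k}$ be the letter of $u_{l}$ nearest the product. Deleting this $d_{k}$ lowers the weight by $k$, so one tries to split $\{1,\dots,n\}$ into $k+1$ consecutive blocks $C_{1},\dots,C_{k+1}$ such that the sub-monomial carried by each block is again of weight $0$ (with $d_{k}$ removed from $u_{l}$ inside the block $C_{p}$ containing $l$), hence lies in $\D_{\infty}$. Granting such a splitting, $x$ appears with coefficient $\pm 1$ among the monomials in the expansion of $z\c(y^{(1)},\dots,y^{(k+1)})$, where $z\in\D_{\infty}^{1}(k+1)$ is the generator ``$d_{k}$ on the $p$-th input of $1^{\ot(k+1)}$'' and $y^{(j)}$ is the weight-$0$ sub-monomial on $C_{j}$. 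Each $y^{(j)}$ has degree $\le g-1$, hence lies in $\langle\D_{\infty}^{1}\rangle$ by the inductive hypothesis, so the whole composite does too; the remaining monomials in the expansion arise by pushing $d_{k}$ onto the other leaves of $C_{p}$, and a secondary induction on the statistic $\Phi(u_{1},\dots,u_{n}):=\sum_{j}j\,|u_{j}|$ — choosing $l$, when possible, to be the largest index in its block so that $x$ becomes the $\Phi$-maximal term — then puts $x$ in $\langle\D_{\infty}^{1}\rangle$ as well.

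The main obstacle is the combinatorial bookkeeping behind the block splitting. Writing $S(m):=\sum_{i\le m}(w(u_{i})-1)$, so that $S(0)=0$, $S(n)=-1$ and every step of $S$ is $\ge -1$, the existence of the required blocks amounts to choosing $l$ (and hence $k$ and $p$) so that $S$ realises the staircase of values $0,-1,\dots,-(p-1),k-p,k-p-1,\dots,-1$ at the successive block ends; once $p$ is forced by the value of $S$ at $l$, the intermediate values are automatically attained, but one must still show that a suitable $l$ always exists and that the $\Phi$-leading-term argument closes. The key device is that when some leaf carries a single-letter word $d_{k}$ one may take $C_{p}=\{l\}$, so that the $d_{k}$ of $z$ is applied directly to a leaf and $z\c(\cdots)$ is a \emph{single} normal-form monomial with no pushing at all; such monomials are therefore immediately seen to lie in $\langle\D_{\infty}^{1}\rangle$ and anchor the $\Phi$-induction, to which the general monomial is reduced by peeling off one letter at a time. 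Establishing that such a leaf and such a splitting always exist, and pinning down the signs in the expansion of $z\c(\cdots)$, is the delicate but elementary part of the argument.
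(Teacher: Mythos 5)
Your plan is genuinely different from the paper's. The paper first reduces to what it calls \emph{higher derived products} --- monomials $x_{1}\ot\cdots\ot x_{n}$ in which all the derivations are stacked over a single leaf, e.g.\ $1\ot d_{1}d_{2}\ot 1\ot 1$ --- and then runs an induction on the degree for those alone; you instead attack an arbitrary normal-form monomial directly, using the partial-sum function $S$ to locate a splitting into weight-$0$ blocks and peeling the bottom letter off one leaf via a generator of $\D_{\infty}^{1}$. The normal form itself, the count $\dim\D_{\infty}^{1}(n)=n$, the weight bookkeeping through $S$, and the observation that the blocks $y^{(j)}$ have strictly smaller degree (so the primary induction applies to them) are all correct, and the existence of \emph{some} admissible splitting can indeed be extracted from the unit-down-step property of $S$. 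So the skeleton is viable and arguably more explicit than the paper's two-step reduction.

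The genuine gap is the secondary induction. When $z\c(y^{(1)},\dots,y^{(k+1)})$ is expanded, the error terms obtained by pushing $d_{k}$ onto the other leaves of $C_{p}$ have the \emph{same} degree $g$ as $x$, so everything hinges on the claimed $\Phi$-ordering, and that ordering fails. Already for $x=d_{1}^{2}\ot 1\ot 1\in\D_{\infty}^{2}(3)$ the only nonempty leaf is $l=1$ and the only admissible block is $C_{1}=\{1,2\}$, so $l$ cannot be chosen as the largest index of its block and the error term $d_{1}\ot d_{1}\ot 1$ has \emph{larger} $\Phi$ (there it happens to be an anchor, so you survive). But for $x=d_{1}^{2}\ot d_{1}\ot 1\ot 1\in\D_{\infty}^{3}(4)$ one checks $S=(0,1,1,0,-1)$, so no leaf satisfies the condition needed to sit at the right end of its block; every admissible peeling (from leaf $1$ or leaf $2$, always with $C_{1}=\{1,2,3\}$) produces a same-degree, non-anchor error term of strictly larger $\Phi$, namely $d_{1}\ot d_{1}^{2}\ot 1\ot 1$ or $d_{1}^{2}\ot 1\ot d_{1}\ot 1$. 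Hence the proposed well-founded order does not exist as stated and the induction never terminates on such monomials. The relations can still be untangled --- e.g.\ one first disposes of $d_{1}\ot d_{1}^{2}\ot 1\ot 1$, which \emph{is} an exact composition $(d_{1}\ot 1)\c_{2}(d_{1}^{2}\ot 1\ot 1)$ because its first leaf can be isolated in a singleton block, and only then solves for $d_{1}^{2}\ot d_{1}\ot 1\ot 1$ --- but specifying a termination order that always works is precisely the content of the proposition, and it is the part your argument leaves open. (For comparison, the paper sidesteps this by first collecting all derivations onto one leaf and only then inducting on degree.)
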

\begin{proof}(Sketch)
We call a monomial of derivations $d_{i_{1}}d_{i_{2}}\cdots d_{i_{f}}$
a higher order derivation of order $f$.
A homogeneous element $X:=x_{1}\ot\cdots\ot x_{n}\in\D_{\infty}^{a}(n)$
is called a \textbf{higher derived product of order} $a$,
if the derivations in $X$ are all the same position
(e.g. $1\ot d_{1}d_{2}\ot 1\ot 1$.)
Generators of $\D^{1}_{\infty}$ are special higher derived products of order $1$.
It is easy to prove that the deformation operad is generated by higher derived products.
Hence the problem is reduced to proving that
the higher derived product of any order is generated by $\D^{1}_{\infty}$.
This will be solved by using induction w.r.t. degree.
\end{proof}
\indent
Let us define on the deformation operad a differential.
If the derivations $d_{1},d_{2},...$ are deformations of a formal derivation $d_{0}$
(not necessarily differential),
then the deformation derivation $d_{\hbar}:=d_{0}+\hbar d_{1}+\hbar^{2}d_{2}+\cdots$
satisfies $d_{\hbar}d_{\hbar}=d_{0}d_{0}(\neq 0)$, or equivalently,
\begin{equation}\label{dddd}
[d_{0},d_{n}]+\sum_{i+j=n}d_{i}d_{j}=0,
\end{equation}
where $[d_{0},d_{n}]$ is the graded commutator and $i,j,n\ge 1$.
By using (\ref{dddd}) one can define a differential on $\D_{\infty}$.
\begin{definition}[differential on $\D_{\infty}$]\label{defdonD}
For any $x_{1}\ot\cdots\ot x_{n}\in\D_{\infty}(n)$,
\begin{eqnarray*}
\d(x_{1}\ot\cdots\ot x_{n})&:=&\sum_{i=1}^{n}(-1)^{|x_{1}|+\cdots+|x_{i-1}|}
(x_{1}\ot\cdots\ot\d x_{i}\ot\cdots x_{n}),\\
\d(x_{i})&:=&[d_{0},x_{i}].
\end{eqnarray*}
\end{definition}
The homogeneous condition $\d\d=0$ is followed
from the Bianchi identity\footnote{
By definition, $\d(d_{1})=0$, which yields $\d\d(d_{2})=0$,
which yields $\d\d(d_{3})=0$,... forever.
}.
Therefore, $(\D_{\infty},\d)$ becomes a dg operad.
For example,
\begin{eqnarray*}
\d(d_{2}\ot 1\ot 1)&=&-d_{1}^{2}\ot 1\ot 1\\
&=&-(d_{1}\ot 1)\c_{1}(d_{1}\ot 1)-(d_{1}\ot 1)\c_{2}(d_{1}\ot 1),
\end{eqnarray*}
which yields
\begin{lemma}\label{hdperm}
$\H^{top}(\D_{\infty},\d)\cong\s\Perm$.
\end{lemma}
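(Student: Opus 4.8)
The plan is to compute $\H^{\ast}(\D_{\infty},\d)$ arity by arity and then recognise the answer through Chapoton's presentation of $\s\Perm$ recalled above. First I would pin down the top degree. An element of $\D_{\infty}(n)=(\O^{\bullet,0})(n)$ of homological degree $g$ is a combination of monomials carrying $g$ formal derivations whose subscripts add up to $n-1$ (the weight-zero condition), so $g\le n-1$, with equality forcing every derivation to be $d_{1}$; thus the top component is $\O^{n-1,0}(n)$, spanned by the monomials $d_{1}^{a_{1}}\ot\cdots\ot d_{1}^{a_{n}}$ with $a_{1}+\cdots+a_{n}=n-1$. Since both the homological degree and the weight are additive and $t(m)+t(n)=t(m+n-1)$ for $t(n):=n-1$, the collection $\D_{\infty}^{top}:=(\O^{n-1,0}(n))$ is a suboperad; every top-degree element is a $\d$-cycle, and as $\d$ is an operadic derivation the boundaries $\d(\O^{n-2,0}(n))$ form an operadic ideal in $\D_{\infty}^{top}$. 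Hence $\H^{top}(\D_{\infty},\d)$ is the quotient operad $\D_{\infty}^{top}/\big(\d(\O^{\bullet,0})\big)$, and it remains to identify it.

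Next I would compute the image of $\d$ in top degree. An element of $\O^{n-2,0}(n)$ carries $n-2$ derivations with subscripts summing to $n-1$, i.e. exactly one $d_{2}$ together with $n-3$ copies of $d_{1}$, placed in some order among the $n$ slots. Because $\d d_{1}=0$, $\d d_{2}=-d_{1}^{2}$ and $\d$ is a derivation, $\d$ sends such a monomial to $\pm$ the single monomial obtained by replacing its $d_{2}$ with $d_{1}^{2}$; and every monomial $d_{1}^{a_{1}}\ot\cdots\ot d_{1}^{a_{n}}$ with some $a_{i}\ge 2$ arises this way (merge two adjacent copies of $d_{1}$ in the $i$th slot back into a $d_{2}$). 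Therefore $\d(\O^{n-2,0}(n))$ is precisely the span of the monomials with $d_{1}$-degree $\ge 2$ in some slot, and $\H^{n-1}(\D_{\infty}(n))$ is spanned by the classes of the monomials $d_{1}^{a_{1}}\ot\cdots\ot d_{1}^{a_{n}}$ with all $a_{i}\le 1$ --- equivalently, with exactly one $a_{i}=0$ --- so it has dimension $n$.

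Finally I would match this with $\s\Perm$. By the description of $\s\Perm\cong(\Q^{n-1}(n))$ above, $\s\Perm(n)$ is spanned by the $n$ monomials with one slot equal to $1$ and all other slots equal to $d_{0}$, so the substitution $d_{1}\mapsto d_{0}$ is an $S_{n}$-equivariant linear bijection from $\H^{n-1}(\D_{\infty}(n))$ onto $\s\Perm(n)$. To upgrade it to an operad isomorphism I would check that, modulo top-degree boundaries, the generators $d_{1}\ot 1$ and $1\ot d_{1}$ of $\H^{top}(\D_{\infty})(2)\cong\s\Perm(2)$ satisfy exactly the odd associativity and odd permutation relations defining $\Q^{n-1}\cong\s\Perm$: when these generators are composed into slots, the ``diagonal'' terms produced by the Leibniz rule $d_{1}(1\ot 1)=d_{1}\ot 1+1\ot d_{1}$ are exactly the boundaries $\d(d_{2}\ot\cdots)$, hence vanish modulo boundaries --- which is precisely the effect of the relation $d_{0}d_{0}=0$ in $\Q$. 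This gives an operadic surjection $\s\Perm\to\H^{top}(\D_{\infty})$, and the dimension count $\dim\s\Perm(n)=n=\dim\H^{n-1}(\D_{\infty}(n))$ (using the proof of Proposition~\ref{LLP}) forces it to be an isomorphism. Alternatively, one argues directly --- exactly as in the proof of Proposition~\ref{LLP} --- that killing the diagonal terms lets one build every surviving monomial from the arity-$2$ generators, so $\H^{top}(\D_{\infty})$ is generated in arity $2$ and the surjection exists.

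The step I expect to be the main obstacle is this last matching: tracking the signs in operadic composition inside $\O$ (where composing an operation into a slot forces a Leibniz expansion of the outer derivation) and checking cleanly that reducing such compositions modulo the top-degree boundaries reproduces Chapoton's quadratic relations. The degree bookkeeping and the dimension counts in the first two paragraphs are routine, granted Proposition~\ref{proofap} and the earlier identification of $\O^{\bullet,0}$.
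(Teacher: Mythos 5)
Your proof is correct and follows essentially the same route as the paper, whose entire argument is the single computation $\d(d_{2}\ot 1\ot 1)=-d_{1}^{2}\ot 1\ot 1=-(d_{1}\ot 1)\c_{1}(d_{1}\ot 1)-(d_{1}\ot 1)\c_{2}(d_{1}\ot 1)$ followed by ``which yields'' the lemma: in both cases the point is that killing the exact classes $d_{1}^{2}$ in top degree imposes exactly the relation $d_{0}d_{0}=0$ of the presentation $\s\Perm\cong(\Q^{n-1}(n))$. Your write-up merely supplies the bookkeeping the paper leaves implicit (identification of the top degree, the precise image of $\d$, and the count $\dim\H^{n-1}(\D_{\infty}(n))=n=\dim\s\Perm(n)$).
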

\begin{remark}
$\D_{\infty}\neq\s\Perm_{\infty}$.
\end{remark}
\subsection{Dimension of $\D_{\infty}$}

In this section we compute the dimension of the deformation operad.
Consider a set of derivations,
$A=\{d_{1}^{(\lambda_{1})} \ , \ d_{2}^{(\lambda_{2})} \ , \ ... \ , \ d_{n-1}^{(\lambda_{n-1})}\}$,
where $d_{i}^{(\lambda_{i})}$ is the $\lambda_{i}$-copies of $d_{i}$
(e.g. $d_{i}^{(3)}=\{d_{i},d_{i},d_{i}\}$ and $d_{i}^{(0)}:=\emptyset$.)
We define a space $\Delta^{(\lambda_{1},\lambda_{2},...,\lambda_{n-1})}(n)$
as a subspace of $\D_{\infty}^{a}(n)$ such that each of elements
has all derivations in $A$.
Here the degree $a$ is equal to the cardinal number of $A$.
For example, when $n=5$ and $a=2$,
\begin{eqnarray*}
\Delta^{(0,2,0,0)}(5)&=&<d_{2}d_{2}\ot 1\ot 1\ot 1 \ , \ d_{2}\ot d_{2}\ot 1\ot 1 \ ,\ ...>,\\
\Delta^{(1,0,1,0)}(5)&=&<d_{1}d_{3}\ot 1\ot 1\ot 1 \ , \ d_{1}\ot d_{3}\ot 1\ot 1 \ ,\ ...>,
\end{eqnarray*}
which are subspaces of $\D^{2}_{\infty}(5)$.
In particular, the top degree part of $\D_{\infty}(n)$ is
$$
\D^{top}_{\infty}(n)=\D^{n-1}_{\infty}(n)=\Delta^{(n-1,0,...,0)}(n).
$$
From the assumptions of degree and weight, we obtain
two natural conditions in combinatorial theory,
\begin{eqnarray}
\label{dc01}\lambda_{1}+\cdots+\lambda_{n-1}&=&a,\\
\label{dc02}\lambda_{1}+2\lambda_{2}+\cdots+(n-1)\lambda_{n-1}&=&n-1.
\end{eqnarray}
The dimension of $\Delta^{(\lambda_{1},\lambda_{2},...,\lambda_{n-1})}(n)$
is easily computed :
\begin{lemma}
\begin{equation}\label{dimdelta}
\dim\Delta^{(\lambda_{1},\lambda_{2},...,\lambda_{n-1})}(n)=
\binom{n+a-1}{a}\frac{a !}{\lambda_{1}!\cdots\lambda_{n-1}!}.
\end{equation}
\end{lemma}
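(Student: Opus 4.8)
The plan is to write down an explicit monomial basis of $\Delta^{(\lambda_{1},\dots,\lambda_{n-1})}(n)$ and then count it by a two-stage combinatorial argument: first pick the (non-commutative) word of derivations, then pick how that word is distributed over the $n$ leaves. First I would recall the vector-space structure of $\O(n)$ coming from its construction. Since $1\ot 1\in\Com(2)$ is commutative and associative, the $\Com$-part contributes, in arity $n$, only the single fully symmetric monomial $1^{\ot n}$; everything else in $\O(n)$ is obtained by decorating the $n$ leaves of $1^{\ot n}$ with letters from $V=\langle d_{1},d_{2},\dots\rangle$, the relation $d_{j}(1\ot 1)=d_{j}\ot 1+1\ot d_{j}$ serving precisely to push each derivation down onto a leaf. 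Hence $\O(n)$ is spanned by the leaf-decorated monomials $x_{1}\ot\cdots\ot x_{n}$ with each $x_{i}$ a word in the $d_{j}$'s, i.e. an element of $T(V)=\O(1)$ (recall \eqref{defo1}); moreover these monomials are linearly independent, that is, $(R_{\O})$ hides no further relation, so that $\O(n)\cong T(V)^{\ot n}$ as a graded vector space. This is already visible in the displayed bases of $\D_{\infty}(1),\D_{\infty}(2),\D_{\infty}^{1}(3),\D_{\infty}^{2}(3)$ above; I would either invoke the construction of $\O$ directly or reduce the independence to that of the evident monomial basis of the free ``commutative--associative product together with derivations'' operad.

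Granting this basis, $\D_{\infty}^{a}(n)$ is the span of those monomials $x_{1}\ot\cdots\ot x_{n}$ of degree $a$ (total number of derivation letters equal to $a$) and weight $0$, and by definition $\Delta^{(\lambda_{1},\dots,\lambda_{n-1})}(n)$ is the span of exactly those basis monomials in which $d_{j}$ occurs precisely $\lambda_{j}$ times for every $j$ --- the constraints \eqref{dc01} and \eqref{dc02} being automatically recorded by $\sum_{j}\lambda_{j}=a$ and $\sum_{j}j\lambda_{j}=n-1$. So it only remains to count these monomials.

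The count factors cleanly. A monomial $x_{1}\ot\cdots\ot x_{n}$ is the same datum as the concatenated word $w:=x_{1}x_{2}\cdots x_{n}$ in the letters $d_{1},\dots,d_{n-1}$ together with the composition $a=|x_{1}|+\cdots+|x_{n}|$ telling where to cut $w$ into its $n$ ordered (possibly empty) blocks. The number of words $w$ of length $a$ using $d_{j}$ exactly $\lambda_{j}$ times is the multinomial coefficient $\tfrac{a!}{\lambda_{1}!\cdots\lambda_{n-1}!}$, and the number of compositions of $a$ into $n$ non-negative parts is $\binom{n+a-1}{n-1}=\binom{n+a-1}{a}$; the two choices are independent, so multiplying gives $\dim\Delta^{(\lambda_{1},\dots,\lambda_{n-1})}(n)=\binom{n+a-1}{a}\tfrac{a!}{\lambda_{1}!\cdots\lambda_{n-1}!}$, which is \eqref{dimdelta}. (As a check: for $n=5$, $\Delta^{(1,0,1,0)}(5)$ has dimension $\binom{6}{2}\cdot\tfrac{2!}{1!\,1!}=30$, matching the direct count $5\cdot 4$ for $d_{1},d_{3}$ in distinct slots plus $5\cdot 2$ for both in the same slot.) The only genuine obstacle is the linear-independence assertion in the first paragraph; once that is in place the rest is bookkeeping.
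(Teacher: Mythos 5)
Your proof is correct and is essentially the paper's own argument made explicit: the paper disposes of this in one line (``a kind of balls and boxes question, $1^{\ot n}$ is $n$-boxes and $d_{1},d_{2},\dots$ are balls''), which is exactly your two-stage count of a word in the $d_{j}$'s (the multinomial factor) together with a composition of $a$ into $n$ nonnegative blocks (the $\binom{n+a-1}{a}$ factor). The only thing you add beyond the paper is the justification that the leaf-decorated monomials form a basis of $\O(n)\cong T(V)^{\ot n}$, which the paper silently assumes; your numerical checks against the displayed bases are consistent with the formula.
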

\begin{proof}
This is a kind of balls and boxes questions, i.e.,
$1^{\ot n}$ is $n$-boxes and $d_{1},d_{2},...$ are balls.
\end{proof}
Since $\Delta^{(...)}(n)$ is a direct summand of $\D_{\infty}^{a}(n)$, we obtain
\begin{equation}\label{koshiki}
\dim\D_{\infty}^{a}(n)=
\sum_{(\ref{dc01}),(\ref{dc02})}\binom{n+a-1}{a}\frac{a !}{\lambda_{1}!\cdots\lambda_{n-1}!}=
\binom{n+a-1}{a}\binom{n-2}{a-1},
\end{equation}
which yields
\begin{proposition}\label{propdim}
$$
\dim\D_{\infty}(n)=\sum_{a=1}^{n-1}\binom{n+a-1}{a}\binom{n-2}{a-1}.
$$
When $n=1$, $\dim\D_{\infty}(1)=1$ because $\D_{\infty}(1)=\mathbb{K}$.
\end{proposition}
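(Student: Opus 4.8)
The plan is to obtain $\dim\D_{\infty}(n)$ by first decomposing $\D_{\infty}(n)$ along its grading and then summing the degreewise counts, the degreewise count being exactly formula (\ref{koshiki}). Concretely, since $\D_{\infty}=(\O^{\bullet,0})$ is graded by the number of formal derivations and the weight-zero condition (\ref{dc02}) together with $\lambda_{i}\ge 0$ forces the degree $a$ into the range $1\le a\le n-1$, one has the direct sum decomposition $\D_{\infty}(n)=\bigoplus_{a=1}^{n-1}\D_{\infty}^{a}(n)$ for $n\ge 2$, hence $\dim\D_{\infty}(n)=\sum_{a=1}^{n-1}\dim\D_{\infty}^{a}(n)$; the case $n=1$ is trivial because $\D_{\infty}(1)=\mathbb{K}$. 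So the whole statement reduces to establishing (\ref{koshiki}), namely $\dim\D_{\infty}^{a}(n)=\binom{n+a-1}{a}\binom{n-2}{a-1}$.

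For (\ref{koshiki}) I would refine the decomposition one step further. A monomial basis vector of $\D_{\infty}^{a}(n)$ — built, via (\ref{defo1}) and the relation $d_{k}(1\ot 1)=d_{k}\ot 1+1\ot d_{k}$, from $1^{\ot n}$ by inserting derivations into the $n$ slots — is determined by the multiplicities $\lambda_{1},\dots,\lambda_{n-1}$ of $d_{1},\dots,d_{n-1}$ it contains and by the way these derivations are distributed among the slots; the degree and weight bookkeeping are precisely the constraints (\ref{dc01}) and (\ref{dc02}). Thus $\D_{\infty}^{a}(n)=\bigoplus_{(\lambda)}\Delta^{(\lambda_{1},\dots,\lambda_{n-1})}(n)$, the sum over tuples obeying (\ref{dc01})--(\ref{dc02}), and applying formula (\ref{dimdelta}) to each summand gives $\dim\D_{\infty}^{a}(n)=\binom{n+a-1}{a}\sum_{(\lambda)}\frac{a!}{\lambda_{1}!\cdots\lambda_{n-1}!}$.

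The combinatorial heart is the evaluation of this last sum. I would read $\frac{a!}{\lambda_{1}!\cdots\lambda_{n-1}!}$ as the number of words $(c_{1},\dots,c_{a})$ in the alphabet $\{1,\dots,n-1\}$ in which each letter $i$ occurs exactly $\lambda_{i}$ times; summing over all $(\lambda)$ satisfying (\ref{dc01})--(\ref{dc02}) then counts precisely the ordered $a$-tuples of positive integers with $c_{1}+\cdots+c_{a}=n-1$ (the bound $c_{j}\le n-1$ being automatic once $c_{1}+\cdots+c_{a}=n-1$ with $a\ge 1$ positive parts), i.e.\ the compositions of $n-1$ into exactly $a$ parts. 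By a stars-and-bars argument there are $\binom{n-2}{a-1}$ of these, which proves (\ref{koshiki}); summing over $a=1,\dots,n-1$ then gives the proposition.

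I do not expect a genuine obstacle. The only points that need care are: verifying that $\D_{\infty}^{a}(n)=\bigoplus_{(\lambda)}\Delta^{(\lambda)}(n)$ is both exhaustive and over internally disjoint summands (a routine inspection of the monomial basis of $\O^{\bullet,0}$), and correctly matching the index set $\{(\lambda):(\ref{dc01}),(\ref{dc02})\}$ with the set of compositions of $n-1$ into $a$ parts. The only piece carrying real combinatorial content at the level of a single summand, formula (\ref{dimdelta}), is already available, so the remaining work is bookkeeping rather than mathematics.
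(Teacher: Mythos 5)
Your proof is correct and follows essentially the same route as the paper: decompose $\D_{\infty}(n)$ by degree $a$, decompose each $\D_{\infty}^{a}(n)$ into the summands $\Delta^{(\lambda_{1},\dots,\lambda_{n-1})}(n)$ indexed by (\ref{dc01})--(\ref{dc02}), apply (\ref{dimdelta}), and sum. The only difference is that you spell out the ``well-known formula'' the paper invokes in (\ref{koshiki}), namely that $\sum\frac{a!}{\lambda_{1}!\cdots\lambda_{n-1}!}=\binom{n-2}{a-1}$ via counting compositions of $n-1$ into $a$ positive parts, which is a welcome clarification but not a different argument.
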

In (\ref{koshiki}), we used an well-known formula.

\section{Higher derived brackets}

We study the operad $\Lie\ot\D_{\infty}$ with a differential $\Lie\ot\d$.
We denote by $\{1,2\}$ the Lie bracket in $\Lie(2)$
and by $\{1,2,...,n\}=\{\{\{1,2\},...\},n\}$ the left $n$-fold bracket in $\Lie(n)$.
The elements of $\Lie\ot\D_{\infty}$ are identified to Lie brackets
whose leaves are derived (recall the proof of Proposition \ref{LLP}.)
The brackets in $\Lie\ot\D^{1}_{\infty}$ are called
the \textbf{higher derived brackets}
and the higher derived bracket is said to be \textbf{normal},
if the Lie bracket is left-normed and if the derivation acts
on the leaf of the most left-side, that is,
$$
\{d_{n}(l_{1}),l_{2},...,l_{n+1}\},
$$
where $l_{1},l_{2},...,l_{n+1}$ are labels.
The set of the normal higher derived brackets forms a linear base of $\Lie\ot\D^{1}_{\infty}$.
\begin{proposition}
$(\Lie\ot\D^{1}_{\infty})(n)\cong sS_{n}$ for each $n\ge 2$.
\end{proposition}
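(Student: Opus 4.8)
The plan is to exhibit an explicit basis of $(\Lie\ot\D^{1}_{\infty})(n)$ indexed by $S_{n}$ and to observe that it is a single free $S_{n}$-orbit, which forces the representation to be the (degree-shifted) regular one. As in the proof of Proposition \ref{LLP} I identify an element of the Hadamard product $\Lie\ot\D_{\infty}$ with a Lie bracket whose leaves are decorated by derivations. Concretely, write $X_{i}:=1^{\ot(i-1)}\ot d_{n-1}\ot 1^{\ot(n-i)}$ for the $i$-th generator of $\D^{1}_{\infty}(n)$ and $L:=\{1,2,\dots,n\}\in\Lie(n)$ for the standard left $n$-fold bracket. Then the normal higher derived bracket $b_{\mathrm{id}}:=\{d_{n-1}(1),2,\dots,n\}$ is $L\ot X_{1}$, and for $\tau\in S_{n}$ the normal bracket $b_{\tau}:=\{d_{n-1}(\tau(1)),\tau(2),\dots,\tau(n)\}$ equals $(\tau\cdot L)\ot X_{\tau(1)}=(\tau\cdot L)\ot(\tau\cdot X_{1})=\tau\cdot b_{\mathrm{id}}$.

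First I would confirm that $\{b_{\tau}:\tau\in S_{n}\}$ is a basis of $(\Lie\ot\D^{1}_{\infty})(n)$. Spanning is exactly the statement recalled just before the proposition: any higher derived bracket is brought to normal form by using skew-symmetry to pull the derived leaf into the outermost-left slot and then rewriting the remaining Lie monomial in left-normed form with that leaf first. For independence it suffices to count, since $\dim\D^{1}_{\infty}(n)=n$ (as $\D^{1}_{\infty}(n)\cong s\mathbb{K}^{n}$) and $\dim\Lie(n)=(n-1)!$, so $\dim(\Lie\ot\D^{1}_{\infty})(n)=n!=\card S_{n}$, matching the number of the $b_{\tau}$; alternatively, independence follows directly from the decomposition $\D^{1}_{\infty}(n)=\bigoplus_{i=1}^{n}\langle X_{i}\rangle$ together with the classical fact that, for each fixed leaf $c$, the left-normed brackets $\{c,a_{2},\dots,a_{n}\}$ form a basis of $\Lie(n)$ (this is the skew-symmetrized version of the right-normed basis used to compute $\dim\Lie(n)$).

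Next I would compute the $S_{n}$-action. On the Hadamard product it is diagonal, and relabelling leaves by $\sigma$ sends $L\mapsto\sigma\cdot L$ and $X_{i}\mapsto X_{\sigma(i)}$ with no sign: the bracket of $\Lie$ sits in degree $0$ and we only rename leaves rather than reorder a bracket, while the product of $\Com$ underlying $\D^{1}_{\infty}$ is symmetric and $\D_{\infty}$ carries no $\sgn_{n}$-twist. Hence $\sigma\cdot b_{\tau}=\sigma\cdot(\tau\cdot b_{\mathrm{id}})=b_{\sigma\tau}$, so $\tau\mapsto b_{\tau}$ is an isomorphism of $S_{n}$-sets from $S_{n}$ under left translation onto the chosen basis. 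Therefore $(\Lie\ot\D^{1}_{\infty})(n)\cong\mathbb{K}[S_{n}]$ as an $S_{n}$-module, and since every $b_{\tau}$ has degree $|d_{n-1}|=1$ the whole space is concentrated in degree $1$, giving $(\Lie\ot\D^{1}_{\infty})(n)\cong sS_{n}$.

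The step I expect to need the most care is the sign bookkeeping in the third paragraph: one must check that neither the relabelling action on $\D^{1}_{\infty}$ (built from $\Com$ and the odd operators $d_{k}$) nor the one on $\Lie$ introduces a Koszul sign, so that the action is literally $b_{\tau}\mapsto b_{\sigma\tau}$. Even if a sign cocycle $\sigma\cdot b_{\tau}=\pm b_{\sigma\tau}$ did occur, the conclusion would survive — a basis permuted freely and transitively up to scalars can be rescaled so that $S_{n}$ acts by honest left translation, so the module is still $\mathbb{K}[S_{n}]$ — but it is cleaner to verify that no sign appears. A lesser point is to phrase the spanning/independence assertions with the right normalization, so that one recovers all of $(\Lie\ot\D^{1}_{\infty})(n)$ and not a proper subspace.
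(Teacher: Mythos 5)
Your argument is correct and follows the same route the paper takes implicitly: the proposition is stated as an immediate consequence of the preceding sentence that the normal higher derived brackets form a linear basis of $\Lie\ot\D^{1}_{\infty}$, and your $b_{\tau}$ are exactly these normal brackets. You simply supply the details the paper leaves unwritten, namely the dimension count $(n-1)!\cdot n=n!$ and the check that $S_{n}$ permutes this basis regularly, so the module is the degree-shifted regular representation.
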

\indent
We recall a classical lemma for free Lie algebra.
\begin{lemma}[Elimination Theorem \cite{NB}]
Let $\Delta\sqcup\mathbb{N}$ be a word set
and let $\F_{Lie}(\Delta\sqcup\mathbb{N})$ be the free Lie algebra over the set,
where $\Delta:=<\delta_{1},\delta_{2},...>$
and where the degree of $\delta_{n}$ is $+1$ for each $n$.
Then
$$
\F_{Lie}(\Delta\sqcup\mathbb{N})\cong\F_{Lie}(T)\oplus\F_{Lie}(\mathbb{N}),
$$
where
$T:=\Delta\oplus\{\Delta,\mathbb{N}\}\oplus\{\Delta,\mathbb{N},\mathbb{N}\}\oplus\cdots$.
\end{lemma}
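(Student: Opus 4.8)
This is the Lazard elimination theorem for free graded Lie algebras, and the plan is to run the standard argument, checking that the $\mathbb{Z}$-grading --- $\delta_n$ of degree $+1$, the labels of degree $0$ --- plays no role beyond Koszul signs, since every map below preserves it. Abbreviate $\F:=\F_{Lie}(\Delta\sqcup\mathbb{N})$ and let $\frak{a}\subseteq\F$ be the ideal generated by $\Delta$. First I would record the easy half: the surjection $\pi:\F\to\F_{Lie}(\mathbb{N})$ sending every $\delta_n$ to $0$ and restricting to the identity on $\mathbb{N}$ is split by the obvious inclusion $\F_{Lie}(\mathbb{N})\hookrightarrow\F$, and $\Ker\pi=\frak{a}$ (compare the universal properties of $\F/\frak{a}$ and of $\F_{Lie}(\mathbb{N})$); hence $\F=\frak{a}\oplus\F_{Lie}(\mathbb{N})$ as graded vector spaces, and it remains to prove $\frak{a}\cong\F_{Lie}(T)$ as Lie algebras.

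The core of the argument is to realise $\F$ itself as a semidirect product $E:=\F_{Lie}(T)\rtimes\F_{Lie}(\mathbb{N})$. To build $E$: since $\F_{Lie}(T)$ is free, each label $m\in\mathbb{N}$ determines a unique derivation $D_m$ of $\F_{Lie}(T)$ sending the generator indexed by a symbol $\{\delta,n_1,\dots,n_k\}\in T$ to a sign times the generator indexed by $\{\delta,n_1,\dots,n_k,m\}\in T$; since $\F_{Lie}(\mathbb{N})$ is free, $m\mapsto D_m$ extends to a Lie algebra homomorphism $\F_{Lie}(\mathbb{N})\to\Der(\F_{Lie}(T))$, which defines $E$. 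Then I would write down two mutually inverse maps. Using freeness of $\F$, let $\psi:\F\to E$ be the homomorphism with $\delta_n\mapsto(e_{\delta_n},0)$ and $m\mapsto(0,m)$, where $e_u$ is the generator of $\F_{Lie}(T)$ attached to $u\in T$ (note $\Delta\subseteq T$, so $e_{\delta_n}$ makes sense). Conversely, let $\chi:E\to\F$ be the inclusion on the $\F_{Lie}(\mathbb{N})$-factor and the map $e_{\{\delta,n_1,\dots,n_k\}}\mapsto\{\delta,n_1,\dots,n_k\}\in\frak{a}$ on the $\F_{Lie}(T)$-factor; because $D_m$ was built exactly so that $\chi$ intertwines the $\F_{Lie}(\mathbb{N})$-action on $\F_{Lie}(T)$ with $\mathrm{ad}$ in $\F$, the map $\chi$ is a well-defined Lie algebra homomorphism out of the semidirect product. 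Evaluating $\chi\circ\psi$ and $\psi\circ\chi$ on the (Lie) generators --- a short induction on $k$ handles $\psi$ applied to an iterated bracket $\{\delta,n_1,\dots,n_k\}$ --- shows both composites are the identity, so $\F\cong E$.

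Granting $\F\cong E=\F_{Lie}(T)\rtimes\F_{Lie}(\mathbb{N})$, the ideal factor $\F_{Lie}(T)$ of $E$ corresponds under $\chi$ to the Lie subalgebra of $\F$ generated by $T$; that subalgebra contains $\Delta$ and, being the image of an ideal of $E$, is an ideal of $\F$, hence equals $\frak{a}$, and the isomorphism $\F\cong E$ restricts to $\frak{a}\cong\F_{Lie}(T)$. Combined with the first paragraph this gives $\F\cong\F_{Lie}(T)\oplus\F_{Lie}(\mathbb{N})$. The step I expect to cost the most care is the construction of $E$ together with $\chi$: the well-definedness of the derivations $D_m$ is precisely where freeness of $\F_{Lie}(T)$ enters, verifying that $\chi$ respects the semidirect-product bracket is a direct but sign-sensitive check, and the signs in $D_m$ must be pinned down so that $\psi$ and $\chi$ are genuine mutual inverses. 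One could also argue through universal enveloping algebras, using Poincar\'e--Birkhoff--Witt for the pair $(\frak{a},\F_{Lie}(\mathbb{N}))$ to reduce the freeness of $\frak{a}$ on $T$ to the analogous, easier statement about associative algebras; I would keep that as a backup.
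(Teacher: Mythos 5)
Your proof is correct and is in substance the same argument the paper uses: the paper's one-line proof presents $\F_{Lie}(\Delta\sqcup\mathbb{N})$ as the quotient $\F_{Lie}(T\sqcup\mathbb{N})/I$ with $I$ generated by $\{T,\mathbb{N}\}-(T,\mathbb{N})$, which is precisely your semidirect product $\F_{Lie}(T)\rtimes\F_{Lie}(\mathbb{N})$ written as a presentation by generators and relations, and both versions are the standard Lazard/Bourbaki elimination argument that the lemma cites. The only difference is one of detail: you construct the derivations $D_{m}$ and the mutually inverse homomorphisms explicitly (correctly flagging the sign conventions as the delicate point), whereas the paper compresses all of this into the citation of Bourbaki.
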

\begin{proof}
See A1 in Appendix.
\end{proof}
\begin{proposition}
$\Lie\ot\D_{\infty}$ is generated by the higher derived brackets of normal.
\end{proposition}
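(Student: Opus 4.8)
The plan is to prove the equivalent statement that $\Lie\ot\D_{\infty}$ is generated, as an operad, by the sub-$\S$-module $\Lie\ot\D_{\infty}^{1}$; this is enough since, by the preceding proposition, the normal higher derived brackets form a linear base of each $(\Lie\ot\D_{\infty}^{1})(n)$. The argument is to run parallel to the proof of Proposition \ref{LLP} (the binary model), with the Elimination Theorem taking over the role played there by the dimension count. First I would set up a dictionary with a free Lie algebra: treating the formal derivations as generators, put $\F:=\F_{Lie}(\Delta\sqcup\mathbb{N})$ with $\Delta=\{\delta_{1},\delta_{2},\dots\}$, each $\delta_{i}$ identified with $d_{i}$ and acting on $\F$ through the inner derivation $\{\delta_{i},-\}$. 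Then the component of $\F$ multilinear in the leaves $1,\dots,n$ is $(\Lie\ot\O)(n)$, the operator $d_{i}$ on $\Lie\ot\O$ corresponds to $\mathrm{ad}_{\delta_{i}}$, and $\Lie\ot\D_{\infty}$ is cut out as the weight-$0$ part, the weight of an element being $1-n$ plus the sum of the indices of its $\delta$'s, where $n$ is its arity. A bracket monomial on $n\ge 2$ leaves with no $\delta$ has weight $1-n<0$, so every element of $(\Lie\ot\D_{\infty})(n)$ lies in the ideal $(\Delta)$ of $\F$ generated by the $\delta_{i}$.

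Next I would apply the Elimination Theorem to $\F=\F_{Lie}(\Delta\sqcup\mathbb{N})$: it yields $\F=\F_{Lie}(T)\oplus\F_{Lie}(\mathbb{N})$ with $T=\Delta\oplus\{\Delta,\mathbb{N}\}\oplus\{\Delta,\mathbb{N},\mathbb{N}\}\oplus\cdots$, the first summand being exactly the ideal $(\Delta)$ and \emph{free} on $T$. Under the dictionary a generator $\{\delta_{i},a_{1},\dots,a_{p}\}$ of $T$ is the normal-shaped bracket $\{d_{i}(a_{1}),a_{2},\dots,a_{p}\}$, and this is a genuine normal higher derived bracket precisely when $i=p-1$, i.e. when it has weight $0$. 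Hence every element of $\Lie\ot\D_{\infty}$ is, with no hidden relation, a Lie polynomial in the bare $\delta_{i}$ and in the normal-shaped brackets $\{d_{i}(a_{1}),\dots,a_{p}\}$.

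It then remains to promote a Lie polynomial in these $T$-generators into an operadic composite of weight-$0$ normal higher derived brackets. I would argue by induction on the number of $\delta$-letters, with a secondary induction on arity (or on the number of tensor slots that are not individually of weight $0$): one peels an outermost $d_{k}$ off a leaf by composing on the left with the genuine normal higher derived bracket $\{d_{k}(1),2,\dots,k+1\}$, using the formula
$$\{d_{k}(1),2,\dots,k+1\}\c_{i}u'=\big(u'\text{ with }d_{k}\text{ on leaf }i\text{ and }k\text{ leaves added}\big)+\big(\text{terms with }d_{k}\text{ on a new leaf}\big),$$
in which the first summand is the wanted element and the remaining terms are smaller in the induction order; several $\delta$'s piled on one leaf are disentangled exactly as in the proof of Proposition \ref{proofap}. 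The function of the Elimination Theorem here is to guarantee that the bracket shapes obtained in this way, after Jacobi and antisymmetry, already exhaust $\Lie\ot\D_{\infty}$, so nothing is missed.

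The hard part is this promotion step. The peeling has to be carried out entirely inside the weight-$0$ world, so that every generator actually used is a true normal higher derived bracket and never one of the non-weight-$0$ shapes that sit in $T$, and at the same time one must realize \emph{every} parenthesization $\beta\in\Lie(n)$ of the underlying Lie bracket; left-composition by itself produces only left-combed brackets, so the Elimination decomposition must be fed back in to rewrite the others and to keep the correction terms under control. Verifying that the dictionary between $\F$ and $\Lie\ot\O$ is faithful---in particular that the operators $d_{i}$ correspond to the $\mathrm{ad}_{\delta_{i}}$---takes some care, and the Koszul signs attached to the odd $\delta_{i}$ must be carried along, but neither of these is a real obstacle.
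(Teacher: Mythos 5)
Your setup coincides with the paper's: the linear embedding of $\Lie\ot\D_{\infty}$ into the free Lie algebra $\F_{Lie}(\Delta\sqcup\mathbb{N})$ via $d_{n}(-)\cong\{\delta_{n},-\}$, the appeal to the Elimination Theorem, and the weight bookkeeping are exactly the first steps of the paper's proof. But the step you yourself flag as ``the hard part'' --- promoting a Lie polynomial in the $T$-generators to an operadic composite of weight-$0$ normal brackets --- is where the paper does something different, and your version of it has a genuine gap. Peeling an outer $d_{k}$ off a leaf by left-composition with $\{d_{k}(1),2,\dots,k+1\}$ produces correction terms in which $d_{k}$ has merely moved to another leaf: the number of $\delta$-letters does not drop, and neither does the arity, so neither of your proposed induction parameters decreases on the correction terms. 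You exhibit no well-founded order under which they are ``smaller,'' and the further difficulty you mention --- that left-composition by itself only realizes left-combed bracket shapes --- is not resolved by ``feeding the Elimination decomposition back in.''

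The paper's extraction step avoids all of this by a pigeonhole on weights. Writing $\phi(\mu)=\sum\{t_{1},\dots,t_{a}\}$ with $t_{i}=\{\delta_{j},l_{1},\dots,l_{f}\}$ of weight $j+1-f$, the fact that each monomial has total weight $0$ (and each bracketing contributes $-1$) forces some $t_{i}$ to have non-positive weight, i.e.\ $f\ge j+1$. That $t_{i}$ therefore contains the genuine weight-$0$ normal higher derived bracket $\{d_{j}(l_{1}),\dots,l_{j+1}\}$ as an \emph{inner} composition factor, so that $\mu=\nu(x)\c_{x}\{d_{j}(l_{1}),\dots,l_{j+1}\}$ with $\nu(x)$ still of weight $0$ but with one fewer derivation; induction on the degree (the number of derivations) then closes the argument in one line. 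There is no peeling, no correction term, and no need to realize arbitrary parenthesizations by hand. If you replace your promotion step by this observation, the rest of what you wrote already supplies everything the argument needs.
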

\begin{proof}
Since the rule of derivation is the same as the Jacobi identity,
the operad $\Lie\ot\D_{\infty}$ can be embedded linearly in
the free Lie algebra $\F_{Lie}(\Delta\sqcup\mathbb{N})$,
via the adjoint representation $d_{n}(-)\cong\{\delta_{n},-\}$.
For instance,
\begin{equation}\label{theq2}
\{d_{2}d_{1}1,2,3,4\}\cong\{\delta_{2},\{\delta_{1},1\},2,3,4\}.
\end{equation}
By the elimination theorem,
the target of this embedding, $\phi$, is $\F_{\Lie}(T)$,
$$
\begin{CD}
\phi:\Lie\ot\D_{\infty}@>{\subset}>>\F_{\Lie}(\Delta\sqcup\mathbb{N})@>{proj.}>>\F_{\Lie}(T).
\end{CD}
$$
Thus an arbitrary monomial $\mu\in\Lie\ot\D_{\infty}$
is expressed as a polynomial in $\F_{\Lie}(T)$,
\begin{equation}\label{decm}
\phi:\mu\mapsto\sum\{t_{1},...,t_{a}\},
\end{equation}
where $a$ is the degree of $\mu$
and $t_{i}=\{\delta_{j},l_{1},...,l_{f}\}$
is a homogeneous element of $T$.
In the following, we identify $\mu\cong\phi(\mu)$.
\begin{definition}[weight on $\F_{\Lie}(T)$]
$w(\delta_{n}):=n+1$ and $w\{\cdot,\cdot\}:=-1$.
\end{definition}
We have $w(d_{n})=w\{\delta_{n},-\}=n$, namely, $\phi$ preserves the weight.
Hence the weight of the monomial $\{t_{1},...,t_{a}\}$ which arises in (\ref{decm}) is zero.
From this, we notice that in $\{t_{1},...,t_{a}\}$
there exists $t_{i}$ whose weight is non positive.
Such a $t_{i}$ has the form of
\begin{equation}\label{theq3}
t_{i}=\{\delta_{j},l_{1},...,l_{f}\}\cong\{\{d_{j}(l_{1}),...,l_{j+1}\},...,l_{f}\},
\end{equation}
where $f\ge j+1$.
Without loss of generality, one can put $\mu:=\{t_{1},...,t_{a}\}$.
From (\ref{theq3}),
we obtain a natural decomposition of $\mu$,
\begin{equation}\label{decmu}
\mu=\nu(x)\c_{x}\{d_{j}(l_{1}),...,l_{j+1}\}.
\end{equation}
By the assumption of induction w.r.t. the degree,
$\nu(x)$ is generated by higher derived brackets.
Therefore, $\mu$ is also so.
\end{proof}
From Proposition above, we obtain an operadic epi-morphism,
$$
\theta:\T(\Lie\ot\D^{1}_{\infty})\to\Lie\ot\D_{\infty}.
$$
Here $\theta$ is homogeneous.
We prove that $\theta$ is mono by using the method of dimension counting.
\begin{lemma}\label{keylemma}
$\T(\Lie\ot\D^{1}_{\infty})\cong\Lie\ot\D_{\infty}$.
\end{lemma}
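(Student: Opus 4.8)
The plan is to establish the isomorphism $\T(\Lie\ot\D^{1}_{\infty})\cong\Lie\ot\D_{\infty}$ by combining the surjectivity already obtained (the morphism $\theta$) with a dimension count on each arity. Since the preceding proposition gives the operadic epimorphism $\theta:\T(\Lie\ot\D^{1}_{\infty})\to\Lie\ot\D_{\infty}$, which is homogeneous for the grading, it suffices to show that $\dim\T(\Lie\ot\D^{1}_{\infty})(n)\le\dim(\Lie\ot\D_{\infty})(n)$ in every arity $n$ and every internal degree $a$; equality then forces $\theta$ to be an isomorphism. The right-hand side is already computed: by Proposition \ref{propdim}, $\dim(\Lie\ot\D_{\infty})(n)=(n-1)!\sum_{a=1}^{n-1}\binom{n+a-1}{a}\binom{n-2}{a-1}$, with the factor $(n-1)!$ coming from $\dim\Lie(n)$. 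So the whole problem reduces to controlling the dimension of the free operad on $\Lie\ot\D^{1}_{\infty}$.

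First I would pin down the generating collection: we know $\D^{1}_{\infty}(n)\cong s\mathbb{K}^{n}$, hence $(\Lie\ot\D^{1}_{\infty})(n)\cong sS_{n}$ (the preceding proposition), so $\dim(\Lie\ot\D^{1}_{\infty})(n)=n\cdot(n-1)!=n!$ for $n\ge 2$ and it is $\mathbb{K}$ in arity $1$. Next I would bound $\dim\T(\Lie\ot\D^{1}_{\infty})(n)$ from above by a sum over rooted trees whose internal vertices are decorated by generators, weighted by the arities of those vertices — the standard combinatorial description of a free operad. The key structural input is the \emph{normal form}: by the elimination-theorem argument of the previous proof, every monomial in $\Lie\ot\D_{\infty}$ has a canonical expression of the shape $\nu(x)\c_{x}\{d_{j}(l_{1}),\dots,l_{j+1}\}$, i.e. a normal higher derived bracket grafted at a distinguished leaf of a lower-degree element; iterating this peels off normal generators one at a time. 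This gives a bijective-up-to-sign parametrization of a spanning set of $\T(\Lie\ot\D^{1}_{\infty})(n)$ by sequences of such graftings, which is exactly what $\theta$ realizes. I would then check that no two distinct such parametrized elements can be identified under $\theta$ beyond what the dimension count already permits — concretely, that the composite $\phi:\Lie\ot\D_{\infty}\hookrightarrow\F_{\Lie}(T)$ restricts injectively to the image of each grafting monomial — so that the spanning set is in fact a basis.

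The main obstacle is the dimension inequality itself: showing that the free-operad dimension $\dim\T(\Lie\ot\D^{1}_{\infty})(n)$ does not overshoot $\dim(\Lie\ot\D_{\infty})(n)$. A priori a free operad is very large, and the bound can only hold because the weight constraint (Definition of weight on $\D_{\infty}$, conditions (\ref{dc01}) and (\ref{dc02})) severely restricts which grafted configurations survive in the correct degree. So the heart of the argument is a combinatorial identity: one must show that summing $(n-1)!\prod(\text{local multiplicities})$ over admissible decorated trees, organized by the normal-form stratification above, collapses to $(n-1)!\sum_{a}\binom{n+a-1}{a}\binom{n-2}{a-1}$ — equivalently, to the generating function whose coefficients are (essentially) the Schröder numbers $s(n)$ of Table 1. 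I would prove this by a recursion: let $f(n,a)=\dim\T(\Lie\ot\D^{1}_{\infty})^{a}(n)$, derive from the "peel off one normal generator" decomposition (\ref{decmu}) a recursion $f(n,a)=\sum$ over the arity $j+1$ and position $x$ of the outermost generator of products $f(n-j,a-1)\cdot(\text{binomial counting where the }j\text{ fresh leaves and the }d_{j}\text{ go})$, and verify it matches the recursion satisfied by $\binom{n+a-1}{a}\binom{n-2}{a-1}$ from (\ref{koshiki}) with the $(n-1)!$ factored out. Once the two recursions agree with matching initial data ($n=1$ and $n=2$), the inequality is in fact an equality, $\theta$ is an isomorphism, and Lemma \ref{keylemma} follows.
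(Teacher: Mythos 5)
Your overall route is the paper's route: use the already-established epimorphism $\theta:\T(\Lie\ot\D^{1}_{\infty})\to\Lie\ot\D_{\infty}$ and close the argument by an arity-by-arity dimension count, with the right-hand side given by Proposition \ref{propdim} and the left-hand side governed by Schr\"oder numbers. You also correctly isolate where all the content lives: the combinatorial identity
$n\cdot s(n)=\sum_{a=1}^{n-1}\binom{n+a-1}{a}\binom{n-2}{a-1}$.
But that identity is exactly the step you do not prove. Your plan is to derive a recursion for $f(n,a)=\dim\T(\Lie\ot\D^{1}_{\infty})^{a}(n)$ by ``peeling off one normal generator'' via the decomposition (\ref{decmu}) and to ``verify it matches the recursion satisfied by $\binom{n+a-1}{a}\binom{n-2}{a-1}$,'' but neither recursion is written down, and the peeling step is not well-posed as stated: a decorated tree generally has several bottom corollas (several admissible choices of $x$ and $j$ in (\ref{decmu})), so the naive sum $f(n,a)=\sum f(n-j,a-1)\cdot(\cdots)$ overcounts unless you fix a canonical choice (e.g.\ leftmost bottom vertex) and prove the resulting parametrization is bijective. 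That is a genuine missing piece, not a routine verification.

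Two smaller points. First, there is no need to ``bound $\dim\T(\Lie\ot\D^{1}_{\infty})(n)$ from above by a spanning set'': since $(\Lie\ot\D^{1}_{\infty})(n)\cong sS_{n}$, the free operad has an exact basis of leaf-labeled planar rooted trees, giving $\dim\T(\Lie\ot\D^{1}_{\infty})(n)=n!\cdot s(n)$ on the nose; this is precisely how the paper computes the left-hand side, after which it simply cites the known Gessel--Rogers closed form for the small Schr\"oder numbers rather than proving the identity. Second, your final step of checking that $\phi$ ``restricts injectively to the image of each grafting monomial'' is redundant: once a surjection between finite-dimensional spaces of equal dimension is in hand, it is automatically an isomorphism, and inserting an extra injectivity check only obscures the logic. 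So the skeleton is right, but as written the proof is incomplete at its decisive combinatorial step.
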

\begin{proof}
From Proposition \ref{propdim} we obtain
\begin{equation}\label{dim01}
\dim(\Lie\ot\D_{\infty})(n)
=(n-1)!\sum_{a=1}^{n-1}\binom{n+a-1}{a}\binom{n-2}{a-1},
\end{equation}
where an well-known condition $\dim\Lie(n)=(n-1)!$ is used.\\
\indent
Since $(\Lie\ot\D^{1}_{\infty})(n)\cong sS_{n}$,
the free operad $\T(\Lie\ot\D^{1}_{\infty})$ is
just the labeled planar rooted trees (see Fig 1).
The cardinal number of non-labeled planar rooted trees
is known as \textbf{Schr\"{o}der number} (see Table 1.)
Hence the dimension of $\T(\Lie\ot\D^{1}_{\infty})(n)$ is
$$
\dim\T(\Lie\ot\D^{1}_{\infty})(n)=n!\cdot s(n),
$$
where
$s(n)$ is the Schr\"{o}der number for $n$-trees
and $n!$ is the cardinal of labels.
By using a result in Gessel \cite{Gess} (see also Rogers \cite{Rog}), one can prove that
$$
s(n)=\frac{1}{n!}\dim(\Lie\ot\D_{\infty})(n).
$$
Therefore, for each $n$ $\dim\T(\Lie\ot\D^{1}_{\infty})(n)=\dim(\Lie\ot\D_{\infty})(n)$,
which implies that $\theta$ is an operadic isomorphism.
\end{proof}
The cardinal number of all good brackets is also the Schr\"{o}der number
(so-called Schr\"{o}der bracketing.)
For instance, $((xx)(xxxx)x)$ this is a Schr\"{o}der bracketing of arity $7$
consisting of $(xx)$, $(xxxx)$ and $(xxx)$.
The lemma above says that
the higher derived bracketing is equivalent to
the labeled-Schr\"{o}der bracketing.
\medskip\\
\indent
Now we give the main result of this note.
\begin{theorem}
$\s\Leib_{\infty}\cong\Lie\ot\D_{\infty}$ as a dg-operad.
\end{theorem}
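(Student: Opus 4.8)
The strategy is to construct an explicit operad morphism $\psi : \s\Leib_\infty \to \Lie\ot\D_\infty$, show it is surjective by a generator argument, and then conclude it is an isomorphism by the dimension count already assembled in the preliminaries. The morphism is the higher derived bracket construction made operadic: on generators we send the tree $T_{n+1}\in s\ol{\Zinb^*}(n+1)$ to the normal higher derived bracket
\[
\psi(T_{n+1}) := \{d_n(1),2,\dots,n+1\} \;\in\; (\Lie\ot\D^1_\infty)(n+1),
\]
with $\psi(T_2)=\{d_1(1),2\}=\{1,2\}\ot(d_1\ot1)$. Since $\T(s\ol{\Zinb^*})$ is free on $(T_n)_{n\ge2}$, this prescription extends uniquely to an operad morphism. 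The first real task is to verify that $\psi$ is a \emph{chain} map, i.e.\ $\psi\circ d_t = (\Lie\ot\d)\circ\psi$. On generators this is exactly the identity relating the tree differential relations (\ref{TT01})--(\ref{TT03}) to equation (\ref{dddd}): the defining relation $d_t T_n + \sum_{i+j-1=n}T_iT_j=0$ is mapped, via the derived bracket formula $N(\hbar^n D)(x_1,\dots,x_{n+1})=\{\{Dx_1,x_2\},\dots,x_{n+1}\}$ of Proposition~\ref{proplemma}, onto the Leibniz-type quadratic identity that $[d_0,d_n]+\sum_{i+j=n}d_id_j=0$ imposes after applying $N$. Concretely one checks that the operadic composite of two normal derived brackets $\{d_i(1),\dots\}\c_k\{d_j(1),\dots\}$, summed over the unshuffles prescribed in (\ref{TT03}), reproduces precisely $-\{d_{i+j-1}(1),2,\dots\}$'s differential $\d(d_{i+j-1}\ot1^{\ot\cdot})$ expanded via Definition~\ref{defdonD} and (\ref{dddd}); this is the derived-bracket computation underlying Proposition~\ref{proplemma}, transplanted from algebras to the operad level, and it is where most of the bookkeeping (signs, unshuffles) lives.

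Next I would prove $\psi$ is surjective. By Lemma~\ref{keylemma} we have $\T(\Lie\ot\D^1_\infty)\cong\Lie\ot\D_\infty$, so it suffices to hit the generating space $(\Lie\ot\D^1_\infty)(n)\cong sS_n$. The normal higher derived brackets $\{d_{n-1}(l_1),l_2,\dots,l_n\}$ for $(l_1,\dots,l_n)$ a permutation of $(1,\dots,n)$ form a linear basis of $(\Lie\ot\D^1_\infty)(n)$ (this is the Proposition preceding the Elimination Theorem), and each such bracket is visibly $\psi$ of the generator $T_n$ composed with the corresponding relabelling $\sigma\in S_n$. Hence $\Im\psi$ contains all of $\Lie\ot\D^1_\infty$, and since the latter generates $\Lie\ot\D_\infty$ as an operad, $\psi$ is onto. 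Equivalently, $\psi$ factors as the composite $\s\Leib_\infty \twoheadrightarrow \T(\Lie\ot\D^1_\infty)\xrightarrow{\ \theta\ }\Lie\ot\D_\infty$, both arrows surjective.

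Finally, injectivity comes for free from the arithmetic already done. By Lemma~\ref{firstlemma}, $\s\Leib_\infty(n)$ is linearly isomorphic to the space of labeled planar rooted trees with $n$ leaves, so $\dim\s\Leib_\infty(n)=n!\,s(n)$ with $s(n)$ the Schr\"oder number. By Lemma~\ref{keylemma} (via Proposition~\ref{propdim} and the Gessel identity $s(n)=\frac1{n!}\dim(\Lie\ot\D_\infty)(n)$), $\dim(\Lie\ot\D_\infty)(n)=n!\,s(n)$ as well. Thus $\psi(n)$ is a surjection between finite-dimensional spaces of equal dimension, hence an isomorphism, for every $n$; being also a chain map and an operad morphism, $\psi$ is an isomorphism of dg operads, and under it the tree differential $d_t$ corresponds to $\Lie\ot\d$ as claimed. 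The main obstacle is the first step: getting the signs and the unshuffle combinatorics in the chain-map verification to match on the nose between the tree-differential side (\ref{TT02})--(\ref{TT03}) and the $\D_\infty$-differential side (\ref{dddd}), Definition~\ref{defdonD}. Everything after that is dimension counting already in hand.
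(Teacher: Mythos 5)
Your proposal is correct and follows essentially the same route as the paper: both rest on the identification of the generators $T_n$ with the normal higher derived brackets $\{d_{n-1}(1),2,\dots,n\}$, the Schr\"oder-number dimension count of Lemma~\ref{keylemma} (via Lemma~\ref{firstlemma} and Gessel's identity) to upgrade the surjection to an isomorphism, and the comparison of $d_t$ with $\Lie\ot\d$ that the paper carries out in Appendix A2. The only difference is presentational: you make the morphism $\psi$ explicit via the universal property of the free operad, whereas the paper chains the two lemmas directly.
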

\begin{proof}
From Lemma \ref{firstlemma},
$$
\s\Leib_{\infty}\cong\T(\Lie\ot\D^{1}_{\infty})\cong\Lie\ot\D_{\infty}.
$$
The differential $\Lie\ot\d$ on $\Lie\ot\D_{\infty}$
is the same as the tree-differential on $\s\Leib_{\infty}$.
This claim is followed from Proposition \ref{proplemma}
(See A2 in Appendix for a direct proof.)
For example,
\begin{eqnarray*}
(\Lie\ot\d)\{d_{2}(1),2,3\}&=&-\{d_{1}^{2}(1),2,3\} \\
&=&-\{d_{1}\{d_{1}(1),2\},3\}-\{\{d_{1}(1),d_{1}(2)\},3\} \\
&=&-\{d_{1}\{d_{1}(1),2\},3\}-\{d_{1}(1),\{d_{1}(2),3\}\}-\{d_{1}(2),\{d_{1}(1),3\}\} \\
&=&-[[1,2],3]-[1,[2,3]]-[2,[1,3]],
\end{eqnarray*}
where $[1,2]:=\{d_{1}(1),2\}$.
\end{proof}
Since $\Leib$ is Koszul (cf. Loday \cite{Lod1,Lod2}), we obtain
\begin{corollary}
The dg operad $(\D_{\infty},\d)$ is a resolution over $\s\Perm$.
\end{corollary}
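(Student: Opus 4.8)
The plan is to derive the corollary from the main Theorem together with the Koszulness of $\Leib$, by ``cancelling the $\Lie$ factor'' from a known resolution. Here I understand a \emph{resolution of} $\s\Perm$ to mean a dg operad linked to $\s\Perm$ (regarded as a dg operad with zero differential) by a quasi-isomorphism; I do not claim $\D_{\infty}$ is quasi-free (in fact $\D_{\infty}\neq\s\Perm_{\infty}$), only that it computes $\s\Perm$. The intended morphism is the projection $\pi\colon(\D_{\infty},\d)\to\s\Perm$ onto the top-degree cohomology supplied by Lemma~\ref{hdperm}; as a preliminary one checks that the sub-$\S$-module spanned by all elements of strictly lower degree together with $\Im\d$ is a dg ideal, so that $\pi$ is really a morphism of dg operads and $\s\Perm\cong\H^{top}(\D_{\infty},\d)$ as operads.

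First I would use Koszulness. Since $\Zinb=\Leib^{!}$, the dg operad $\s\Leib_{\infty}=\big(\T(s\ol{\Zinb^{*}}),d_{t}\big)$ of Definition~\ref{defshleib} is the operadic suspension of the cobar construction on the Koszul dual cooperad of $\Leib$. As $\Leib$ is Koszul, the Ginzburg--Kapranov criterion \cite{GK} gives that the canonical augmentation $\s\Leib_{\infty}\to\s\Leib$ is a quasi-isomorphism; equivalently $\H(\s\Leib_{\infty},d_{t})\cong\s\Leib$, concentrated in top weight. Next I would transport this through the identifications already at hand: by Proposition~\ref{LLP}, $\s\Leib\cong\Lie\ot\s\Perm$, and by the Theorem $\s\Leib_{\infty}\cong\Lie\ot\D_{\infty}$ \emph{as dg operads}, the tree differential corresponding to $\Lie\ot\d$. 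Hence
$$
\H\big(\Lie\ot\D_{\infty},\,\Lie\ot\d\big)\;\cong\;\Lie\ot\s\Perm .
$$

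Finally I would cancel the $\Lie$ factor arity by arity. The Hadamard product is formed componentwise, $(\Lie\ot\D_{\infty})(n)=\Lie(n)\ot\D_{\infty}(n)$ with differential $\mathrm{id}\ot\d$, and $\Lie(n)$, being a $\mathbb{K}$-vector space, is flat, so $\H\big((\Lie\ot\D_{\infty})(n)\big)=\Lie(n)\ot\H(\D_{\infty},\d)(n)$. Comparing with the displayed isomorphism and using $\dim\Lie(n)=(n-1)!\neq 0$ forces $\dim\H(\D_{\infty},\d)(n)=\dim\s\Perm(n)=n$. Since by Lemma~\ref{hdperm} the top-degree cohomology $\H^{top}(\D_{\infty},\d)(n)\cong\s\Perm(n)$ already accounts for these $n$ dimensions, the whole cohomology of $(\D_{\infty},\d)$ lies in top degree and $\H(\D_{\infty},\d)\cong\s\Perm$. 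Thus $\pi$ is a quasi-isomorphism and $(\D_{\infty},\d)$ resolves $\s\Perm$.

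The argument is largely formal once the inputs are in place: Koszulness of $\Leib$ kills the cobar cohomology away from top weight, Lemma~\ref{hdperm} identifies what survives, and flatness of $\Lie(n)$ lets one divide out $\Lie$. The one place that needs genuine care is the bookkeeping of gradings: one must verify that the operadic suspension $\s$, the degree and weight gradings on $\O$ (hence on $\D_{\infty}$), and the homological grading of the cobar complex $\s\Leib_{\infty}$ all match up under the isomorphism of the Theorem, so that ``top degree in $\D_{\infty}$'' corresponds precisely to the copy of $\s\Leib$ sitting inside $\H(\s\Leib_{\infty},d_{t})$.
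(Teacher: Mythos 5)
Your argument is correct and follows essentially the same route as the paper, whose proof is just the one-line invocation of Koszulness of $\Leib$, the main Theorem, and Lemma~\ref{hdperm}; you have simply filled in the implicit steps (transporting $\H(\s\Leib_{\infty})\cong\s\Leib$ through the Theorem's isomorphism, cancelling the flat factor $\Lie(n)$ arity by arity, and matching the result against the top-degree cohomology). The extra care you take with the grading bookkeeping and with exhibiting the projection $\pi$ as an actual dg-operad morphism is a welcome expansion of what the paper leaves tacit, not a different method.
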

\begin{proof}
By Lemma \ref{hdperm}.
\end{proof}
\begin{remark}[On sh associative operad]
As an operad $\s\Leib_{\infty}$ is isomorphic to $\s\Ass_{\infty}$.
Hence it is natural to ask how much different
the tree-differential on $\s\Ass_{\infty}$ is from $\Lie\ot\d$.
It is easy to answer this question.
The differential $\Lie\ot\d$ is decomposed into
regular part and non regular one.
Here the word ``regular" means that $\sigma=id$ in (\ref{TT03}).
For example, in
$$
(\Lie\ot\d)\{d_{2}(1),2,3\}=-[[1,2],3]-[1,[2,3]]-[2,[1,3]],
$$
$(\Lie\ot\d)\{d_{2}(1),2,3\}=-[[1,2],3]-[1,[2,3]]$ is regular,
and $-[2,[1,3]]$ is nonregular.
The regular part of $\Lie\ot\d$
is just the tree-differential on $\s\Ass_{\infty}$.
\end{remark}

In the final of this note,
we study a problem of counting the number of trees.
An $n$-corolla, which is denoted by $c_{n}$,
is a non-labeled planar rooted tree with $n$-leaves,
$1$-root and $1$-internal vertex (see Fig 3.)
An arbitrary tree is generated from corollas by grafting of trees.
\begin{figure}[h]
\begin{center}
\includegraphics[width=70mm]{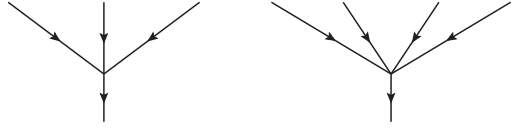}
\end{center}
\caption{$c_{3}$ and $c_{4}$}
\end{figure}
Let $C:=\{c_{2}^{(\lambda_{1})} \ , \ c_{3}^{(\lambda_{2})} \ , \ ... \ , \ c_{n}^{(\lambda_{n-1})}\}$
be a set of corollas, where $c_{i+1}^{(\lambda_{i})}$ is the $\lambda_{i}$-copies of $c_{i+1}$
(like $d_{i}^{(\lambda_{i})}$ in Section 2.)
Let $TC$ be the set of trees generated by $C$.
For example, if $C=\{c_{2},c_{3}\}$,
$$
T\{c_{2},c_{3}\}=\{c_{2}\c_{1}c_{3} \ , \ c_{2}\c_{2}c_{3} \ ,
\ c_{3}\c_{1}c_{2} \ , \ c_{3}\c_{2}c_{2} \ , \ c_{3}\c_{3}c_{2}\}.
$$
where $\c_{i}$ is the grafting product of trees at the $i$th-leaf.
Hence the cardinal number of $T\{c_{2},c_{3}\}$ is $5$.
The number of leaves of $T\in TC$ is computed as follows.
$$
|T|:=\lambda_{1}+2\lambda_{2}+\cdots+(n-1)\lambda_{n-1}+1.
$$
\begin{corollary}
The cardinal number of $TC$ is
$$
\card(TC):=\frac{1}{|T|}\binom{|T|+\Lambda-1}{\Lambda}\frac{\Lambda !}{\lambda_{1}!\cdots\lambda_{n-1}!},
$$
where $\Lambda=\lambda_{1}+\cdots+\lambda_{n-1}$.
\end{corollary}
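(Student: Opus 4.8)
The plan is to extract the count from the operadic isomorphism $\theta:\T(\Lie\ot\D^{1}_{\infty})\to\Lie\ot\D_{\infty}$ of Lemma~\ref{keylemma} together with the dimension formula (\ref{dimdelta}). Put $N:=|T|=\lambda_{1}+2\lambda_{2}+\cdots+(n-1)\lambda_{n-1}+1$ and $\Lambda:=\lambda_{1}+\cdots+\lambda_{n-1}$. A corolla $c_{i+1}$ is precisely an internal vertex with $i+1$ inputs, so a tree in $TC$ is a non-labeled planar rooted tree with $N$ leaves in which the corolla $c_{i+1}$ occurs exactly $\lambda_{i}$ times; call $(\lambda_{1},\dots,\lambda_{n-1})$ the \emph{content} of such a tree (labeled or not). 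With $n$ replaced by $N$, the numerical conditions (\ref{dc01})--(\ref{dc02}) hold automatically, since $N-1=\sum_{i}i\lambda_{i}$ and $\Lambda=\sum_{i}\lambda_{i}$.

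I would first recall, as in the proof of Lemma~\ref{keylemma}, that $\T(\Lie\ot\D^{1}_{\infty})(N)$ is spanned by composites of normal higher derived brackets, these composites being indexed by the labeled planar rooted trees with $N$ leaves; the content grades this basis. Next I would check that $\theta$ preserves content. Applying $\theta$ means composing normal generators in $\Lie\ot\D_{\infty}$, i.e. composing their $\Lie$-parts inside $\Lie$ and their $\D_{\infty}$-parts inside $\O$; by the only relation available, $d_{m}(1\ot 1)=d_{m}\ot 1+1\ot d_{m}$, and since $\O(1)$ is the free associative algebra on $d_{1},d_{2},\dots$, each partial composition merely redistributes the occurring derivations over the $N$ tensor slots of $1^{\ot N}$ and neither creates, cancels, nor merges a derivation. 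Hence $\theta$ carries the span of the content-$(\lambda_{1},\dots,\lambda_{n-1})$ labeled trees into $\Lie(N)\ot\Delta^{(\lambda_{1},\dots,\lambda_{n-1})}(N)$. Since $\theta$ is a degree-preserving isomorphism and, in arity $N$, both sides are the direct sum of their content summands (for $\D_{\infty}$ this is the splitting underlying (\ref{koshiki})), $\theta$ restricts to an isomorphism on each content summand; with $\dim\Lie(N)=(N-1)!$ and (\ref{dimdelta}),
\begin{equation*}
\#\{\text{labeled planar trees with $N$ leaves of content }(\lambda_{1},\dots,\lambda_{n-1})\}=(N-1)!\binom{N+\Lambda-1}{\Lambda}\frac{\Lambda!}{\lambda_{1}!\cdots\lambda_{n-1}!}.
\end{equation*}

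Finally I would divide out the labels. As recorded in the proof of Lemma~\ref{keylemma} (``$n!$ is the cardinal of labels''), each non-labeled planar rooted tree with $N$ leaves admits exactly $N!$ labelings, uniformly in the shape, and relabeling does not change the content; hence the number above equals $N!\cdot\card(TC)$, so that
\begin{equation*}
\card(TC)=\frac{(N-1)!}{N!}\binom{N+\Lambda-1}{\Lambda}\frac{\Lambda!}{\lambda_{1}!\cdots\lambda_{n-1}!}=\frac{1}{N}\binom{N+\Lambda-1}{\Lambda}\frac{\Lambda!}{\lambda_{1}!\cdots\lambda_{n-1}!},
\end{equation*}
which is the asserted identity with $N=|T|$. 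The one step requiring care is the content-preservation of $\theta$; everything after that is bookkeeping with (\ref{dimdelta}). As a check, $C=\{c_{2}^{(N-1)}\}$ forces $\Lambda=\lambda_{1}=N-1$ and the formula collapses to $\frac{1}{N}\binom{2N-2}{N-1}$, the Catalan number counting binary planar rooted trees with $N$ leaves.
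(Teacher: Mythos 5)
Your argument is correct and is essentially the paper's own proof written out in full: the paper's one-line justification (``$\lambda_{i}$ is equal to the number of $d_{i}$ and $\Lambda=a$'') is precisely your identification of the corolla $c_{i+1}$ with the derivation $d_{i}$ under the isomorphism of Lemma~\ref{keylemma}, combined with the dimension formula (\ref{dimdelta}) and division by the $N!$ labelings. Your added verifications (that $\theta$ preserves content and the Catalan-number sanity check) are sound and merely make explicit what the paper leaves implicit.
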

\begin{proof}
$\lambda_{i}$ is equal to the number of $d_{i}$ and $\Lambda=a$.
\end{proof}
It is easy to see that $\card(T\{c_{k}^{n}\})$
is the (Fuss-)Catalan number for $k$-ary trees.
\begin{center}
\textbf{-- Appendix --}
\end{center}
\noindent
\textbf{A1} (\cite{NB}).
Let $X:=S^{c}\sqcup S$ be a wordset decomposed
into a subset $S$ and its complement $S^{c}$,
and let $\F_{Lie}(X)$ be the free Lie algebra over $X$.
Then the following identity holds.
$$
\F_{Lie}(X)\cong\F_{Lie}(T)\oplus\F_{Lie}(S),
$$
where $T$ is a word set,
$$
T:=S^{c}\sqcup(S^{c},S)\sqcup(S^{c},S,S)\sqcup\cdots.
$$
And there exists a natural isomorphism,
$$
T\cong S^{c}\oplus\{S^{c},S\}\oplus\{\{S^{c},S\},S\}\oplus\cdots.
$$
\begin{proof}
$$
\F_{Lie}(S^{c}\sqcup S)\cong
\frac{\F_{Lie}(T\sqcup S)}{I}\cong\F_{Lie}(T)\oplus\F_{Lie}(S),
$$
where $I$ is an ideal generated by the identity,
$\{T,S\}-(T,S)=0$.
\end{proof}
\noindent
\textbf{A2}.
$\Lie\ot\d$ is the tree differntial on $\s\Leib_{\infty}$.
\begin{proof}
Recall (\ref{TT01})-(\ref{TT03})
the defining equations of the tree differential on $\s\Leib_{\infty}$.
It suffices to prove that $\{[d_{i-1},d_{j-1}](1),...,n\}\cong T_{i}T_{j}+T_{j}T_{i}$.
The left-hand side expands to
$$
\{[d_{i-1},d_{j-1}](1),...,n\}=\{d_{i-1}d_{j-1}(1),...,n\}+\{d_{j-1}d_{i-1}(1),...,n\}
$$
and the term is
\begin{equation}\label{proofa201}
\{d_{i-1}d_{j-1}(1),...,n\}=
\{d_{i-1}\{d_{j-1}(1),...,j\},...,n\}+\sum_{k\ge 2}^{j}\{d_{j-1}(1),...,d_{i-1}(k),...,n\},
\end{equation}
where the derivation property is used.
We denote by $T^{(m)}$ a labeled rooted tree whose most left label is $m$.
One can divide $T_{i}T_{j}+T_{j}T_{i}$ into two parts, i.e.,
the part that $T_{j}$ has the lable $1$ and the other part,
$$
T_{i}T_{j}+T_{j}T_{i}=\Big(T_{i}T_{j}^{(1)}+T_{j}^{(1)}T_{i}\Big)
+\Big(T_{i}^{(1)}T_{j}+T_{j}T_{i}^{(1)}\Big).
$$
The first term has the form of
\begin{equation}\label{proofa202}
T_{i}T_{j}^{(1)}+T_{j}^{(1)}T_{i}=T_{i}\c_{1}T_{j}+
\sum_{k\ge 2}^{j}T_{i}^{(k)}T_{j}^{(1)}+T_{j}^{(1)}T_{i}^{(k)}.
\end{equation}
Obviously $\{d_{i-1}\{d_{j-1}(1),...,j\},...,n\}\cong T_{i}\c_{1}T_{j}$.
It is also easy to see that
$$
\{d_{j-1}(1),...,d_{i-1}(k),...,n\}\cong T_{i}^{(k)}T_{j}^{(1)}+T_{j}^{(1)}T_{i}^{(k)},
$$
which yields $(\ref{proofa201})\cong(\ref{proofa202})$.
\end{proof}

Keywords : derived bracket, Leibniz Loday algebra, operad, homotopy algebra,
tree graph, Schr\"{o}der number.
\begin{verbatim}
Kyousuke UCHINO (Freelance)
SAWAYA apartments 103, Yaraicho 9
Shinjyuku Tokyo Japan
email:kuchinon[at]gmail.com
\end{verbatim}
\end{document}